\theoremstyle{exampstyle}
\newtheorem{theorem}{Theorem}
\newtheorem{lemma}{Lemma}[section]
\newtheorem{corollary}[lemma]{Corollary}
\newtheorem{assumption}[lemma]{Assumption}
\numberwithin{equation}{section}
\newbox\boxaddrone \newbox\boxaddrtwo
\def\N+{n\in\mathbb{N}^{+}}
\def\n{\partial{\overrightarrow{\bf n}}}
\def\L{\mathcal{L}}
\def\l{\langle}
\def\ro{\rangle_{\Omega}}
\def\rp{\rangle_{\kappa,\partial \Omega}}
\def\re{\operatorname{Re}}
\def\s{\text{Span}}
\def\1d{\mathcal{D}((-\Delta)^{\gamma+1})}
\def\A{\mathcal{A}}
\title{Theoretical and numerical studies of inverse source problem for the linear parabolic equation with sparse boundary measurements}
\author{ Guang Lin\footnote{Department of Mathematics and Mechanical Engineering, Purdue University, West Lafayette, IN 47906, USA. (Email: guanglin@purdue.edu)}, ~Zecheng Zhang \footnote{Department of Mathematics, Purdue University, West Lafayette, IN 47906, USA. (Email: zecheng.zhang.math@gmail.com)}, ~Zhidong Zhang \footnote{School of Mathematics (Zhuhai), Sun Yat-sen University, Zhuhai 519082, Guangdong, China. (Email: zhangzhidong@mail.sysu.edu.cn)} }
\begin{document}
\maketitle

\begin{abstract}
\noindent We consider the inverse source problem in the parabolic equation, where the unknown source possesses the semi-discrete formulation. Theoretically, we prove that the flux data from any nonempty open subset of the boundary can uniquely determine the semi-discrete source. This means the observed area can be extremely small, and that is why we call the data as sparse boundary data. For the numerical reconstruction, we formulate the problem from the Bayesian  sequential  prediction perspective and conduct the numerical examples which estimate the space-time-dependent source state by state. To better demonstrate the performance of the method, we solve two common multiscale problems from two models with a long sequence of the source. The numerical results illustrate that the inversion is accurate and efficient.
\end{abstract}

\noindent Keywords: inverse source problem, parabolic equation, sparse boundary measurements, uniqueness, numerical reconstruction.\\

\noindent AMS Subject Classifications: 35R30, 65M32, 93E11.

\section{Introduction.}
\subsection{Background and literature. }
As a classical type of PDEs, the parabolic equation is widely applied in physics, engineering, finance, and so on. The inverse source problems in the parabolic equation have various applications in the real world, and the corresponding research has a long history. We list several representative works here \cite{Cannon:1968,HettlichRundell:2001,Ikehata:2007,Isakov:1990,JinKianZhou:2021,Yamamoto:2009}. Denoting the unknown source by $F(x,t)$, to recover it, we need the observations of the solution $u$ on the whole domain $\mathbb R^d\times (0,\infty)$, which is impractical in almost every situation.  Therefore, in the research of the inverse problem, people usually consider some special cases of the unknown source. For instance, a popular case is the variable separable source, i.e., $F(x,t):=p(x)q(t)$. Given the spatial component $p(x)$ or the temporal component $q(t)$, recovering the other unknown part is a classical field in the inverse problems. See \cite{ChengLiu:2020,HettlichRundell:2001,HuangImanuvilovYamamoto:2020,RundellZhang:2018} and the references therein.  Furthermore, the work \cite{RundellZhang:2020} considered the case when $p(x)$ and $q(t)$ are both unknown; in \cite{JinKianZhou:2021} the authors recovered the unknown source $p(x',t)$ where $x'\in \mathbb R^{d-1}$. Comparing with the variable separable form, the semi-discrete formulation below simulates the general source $F(x,t)$ better:   
\begin{equation*}
 F(x,t):=\sum_{k=1}^K p_k(x)\chi_{{}_{t\in[t_{k-1},t_k)}}.
\end{equation*}
We can see the above formulation can approximate the general form $F(x,t)$ accurately if the time mesh $\{0\le t_0<t_1<\cdots\}$ is fine enough. The parabolic equation with a space-time-dependent source has many applications. For instance, in the area of medical research, one needs to trace the blood distribution in some tissues of the human body; in the reservoir simulation area, one example is to trace the amount of liquid injected into oil field consisting of the impermeable rocks; in the ocean, people may need to determine the location of a leaking oil tanker and so on. In \cite{LiZhang:2020}, the inverse source problem with the semi-discrete source is investigated. 

In this work, the unknown source still has semi-discrete formulation. The measurements we used are the boundary flux data, which means the observed area will be the subset of the boundary of the domain. To save cost, absolutely we want the observed area to be as small as possible. In \cite{LiZhang:2020,RundellZhang:2020}, the authors considered the heat equation on the two-dimensional unit disc, and proved the uniqueness theorem under the boundary flux data from two chosen points on the boundary. The proof depended on the explicit representation of the eigensystem of the Laplacian $\Delta$ on the two-dimensional unit disc. The conclusions in  \cite{LiZhang:2020,RundellZhang:2020} confirm that in the heat equation, if the domain has a smooth shape, then it is possible to recover the source from sparse boundary data.  Sequentially, there is a natural question that can we solve  the similar inverse source problem in the parabolic equation with a general domain, in which the explicit representation of the eigensystem is not applicable.

\subsection{Mathematical statement and main theorem.} 
We give the mathematical model as follows:
 \begin{equation}\label{PDE}
 \begin{cases}
  \begin{aligned}
    (\partial_t+\A)u(x,t)&=\sum_{k=1}^K p_k(x)\chi_{{}_{t\in[t_{k-1},t_k)}}, &&(x,t)\in\Omega\times(0,\infty),\\
    u(x,t)&=0,&&(x,t)\in\partial\Omega\times(0,\infty)\cup \Omega\times\{0\}, 
  \end{aligned}
  \end{cases}
 \end{equation}
 where $\Omega\subset\mathbb{R}^d$ is open and bounded. The operator $\A$ is a symmetric elliptic operator, defined as  
 \begin{equation}\label{A}
 \A \psi=-\nabla\cdot(\kappa(x)\nabla \psi)+q(x)\psi,\quad \psi\in H^2(\Omega)\cap H_0^1(\Omega).
 \end{equation}
 Here $\kappa$ and $q$ possess appropriate regularities to support the future proof and they satisfy 
 $$0<C_1\le \kappa(x) \le C_2<\infty\ \text{and}\ q(x)\ge 0\ \text{for a.e.}\ x\in\Omega.$$
 In the source term, the spatial components $\{p_k(x)\}_{k=1}^K$ and the time mesh $\{t_k\}_{k=0}^K$ are all undetermined. The measurements we used are the boundary flux data:   
 $$\frac{\partial u}{\n}(x,t),\ (x,t)\in\Gamma\times(0,\infty)\subset\partial\Omega\times(0,\infty),$$
 where $\overrightarrow{\bf n}$ means the outward normal unit vector of the boundary and $\Gamma\subset\partial\Omega$ is the observed area. Hence, the interested inverse problem in this work is given as follows:
 $$\text{recovering}\ \{t_0,t_k,p_k(x)\}_{k=1}^K\ \text{in the source from the data}\ \frac{\partial u}{\n}\big|_{\Gamma\times(0,\infty)}.$$ 
 
 For this inverse problem, we have two goals: first, proving that the boundary flux data generated from a small observed area $\Gamma$ can uniquely determine the source; second, recovering the unknown source from the sparse boundary data numerically.  
In the aspect of theoretical analysis, we attempt to build the uniqueness theorem. 
Firstly, we give some prior assumptions for the semi-discrete unknown source $\sum_{k=1}^K p_k(x)\chi_{{}_{t\in[t_{k-1},t_k)}}$.  
\begin{assumption}\label{condition_f}
\hfill
\begin{itemize}
\item [(a)] For the time mesh $\{t_k\}_{k=0}^K$, we have $K\in\mathbb{N}^+\cup \{\infty\},$ and there exists $\eta>0$ such that \\$ \inf\{|t_k-t_{k+1}|:k=0,\cdots,K-1\}\ge \eta.$  
\item [(b)] $\{p_k(x)\}_{k=1}^K\subset L^2(\Omega)$, 
$\|p_k\|_{L^2(\Omega)}\ne 0$ for $k=1,\cdots, K$, and $\|p_k-p_{k+1}\|_{L^2(\Omega)}\ne 0$ for $k=1,\cdots, K-1$.
\end{itemize}
\end{assumption}
In Assumption \ref{condition_f} (a), we do not require the time mesh $\{t_k\}_{k=0}^K$ be finite, which means $K$ can be infinity. Also, Assumption \ref{condition_f} (b) is set to make sure the source  
 $\sum\nolimits_{k=1}^K p_k(x)\chi_{{}_{t\in[t_{k-1},t_k)}}$ can not 
 be simplified further. Otherwise, if $\|p_{k_0}\|_{L^2(\Omega)}  =\|p_{k_1-1}-p_{k_1}\|_{L^2(\Omega)}=0$, then we can write the source as 
$$\sum\nolimits_{k\notin \{k_0, k_1-1,k_1\}}p_k(x)\chi_{{}_{t\in[t_{k-1},t_k)}}+p_{k_1}(x) \chi_{{}_{t\in[t_{k_1-2},t_{k_1})}}.$$

In the next section, we will give Assumption \ref{condition_f_regularity}. With Assumptions \ref{condition_f} and \ref{condition_f_regularity}, we can state the main theorem of this work.  
\begin{theorem}\label{uniqueness}
Under Assumptions \ref{condition_f} and \ref{condition_f_regularity}, the flux data from any nonempty open subset of $\partial \Omega$ can uniquely determine the semi-discrete unknown source $\sum\nolimits_{k=1}^K p_k(x)\chi_{{}_{t\in[t_{k-1},t_k)}}$. 

More precisely, given two sets of unknowns $\{t_0,t_k,p_k(x)\}_{k=1}^K$ and $\{\tilde t_0,\tilde t_k,\tilde p_k(x)\}_{k=1}^{\tilde K}$, we denote the corresponding solutions by $u$ and $\tilde u$ respectively, assume Assumptions \ref{condition_f} and \ref{condition_f_regularity} be valid, and let $\Gamma\subset\partial\Omega$ be an arbitrary nonempty open subset. Provided $$\frac{\partial u}{\n}\big|_{\Gamma\times(0,\infty)}=\frac{\partial \tilde u}{\n}\big|_{\Gamma\times(0,\infty)},$$ 
we have $\{t_0,t_k,p_k(x)\}_{k=1}^K=\{\tilde t_0,\tilde t_k,\tilde p_k(x)\}_{k=1}^{\tilde K}$.
\end{theorem}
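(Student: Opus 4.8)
The plan is to reduce the theorem to a single vanishing statement and then recover the source slab by slab, combining the Duhamel formula on a time interval with a unique continuation principle on $\Gamma$.
\emph{Reduction.} By linearity of \eqref{PDE}, put $w:=u-\tilde u$ and pass to the common refinement $\{s_j\}_{j=0}^{L}$ (with $L\le\infty$) of the two time meshes; the $\eta$-separation in Assumption~\ref{condition_f}(a) guarantees $s_j\to\infty$ when $L=\infty$, so every object below is well defined. On each slab $[s_{j-1},s_j)$ the source of $w$ is a fixed function $r_j\in L^2(\Omega)$ (the difference of whichever spatial components of the two sources act there, an absent component read as $0$), so $(\partial_t+\A)w=\sum_{j=1}^{L} r_j\,\chi_{[s_{j-1},s_j)}$ with $w|_{t=0}=0$ and $w|_{\partial\Omega}=0$, while the hypothesis reads $\partial_\nu w=0$ on $\Gamma\times(0,\infty)$. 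It suffices to prove $r_j=0$ for all $j$: Assumption~\ref{condition_f}(b) then forces the two meshes to coincide (a node of $u$ that is not a node of $\tilde u$ would make the $u$-component jump across it while the $\tilde u$-component does not, contradicting $r_j\equiv 0$ on the two adjacent slabs; and $t_0=\tilde t_0$ because $\|p_1\|_{L^2}\ne 0$ and, symmetrically, $\|\tilde p_1\|_{L^2}\ne 0$), and then all the $p_k$ and $t_k$ agree.

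\emph{Peeling.} Argue by induction on $j$. If $r_1=\dots=r_{j-1}=0$, then $w$ solves the homogeneous parabolic problem with zero initial data on $\Omega\times(0,s_{j-1})$, hence $w\equiv 0$ on $\overline\Omega\times[0,s_{j-1}]$, in particular $w(\cdot,s_{j-1})=0$. On the slab $t\in(s_{j-1},s_j)$ the source is the time-independent $r_j$, so $w(\cdot,t)=\int_0^{t-s_{j-1}}e^{-\tau\A}r_j\diff\tau$ and thus $\partial_t w(\cdot,t)=e^{-(t-s_{j-1})\A}r_j$, which is smooth for $t>s_{j-1}$. Differentiating the identity $\partial_\nu w(x,t)=0$ in $t$ and expanding in the Dirichlet eigenbasis $\{\phi_n\}$ of $\A$ (eigenvalues $\lambda_n>0$, using $q\ge 0$), one obtains
\[
\sum_{n}\langle r_j,\phi_n\rangle\,e^{-\lambda_n(t-s_{j-1})}\,\partial_\nu\phi_n(x)=0,\qquad x\in\Gamma,\ \ t\in(s_{j-1},s_j),
\]
the series converging locally uniformly for $t>s_{j-1}$.

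\emph{Mode separation and unique continuation.} Grouping by the distinct eigenvalues $0<\mu_1<\mu_2<\cdots$ and using that the exponentials $\tau\mapsto e^{-\mu_\ell\tau}$ are linearly independent on any interval (uniqueness for Dirichlet series, after analytic continuation to a half-plane), every coefficient must vanish, i.e. for each $\ell$ the eigenfunction $\psi_\ell:=\sum_{\lambda_n=\mu_\ell}\langle r_j,\phi_n\rangle\phi_n$ satisfies $\partial_\nu\psi_\ell=0$ on $\Gamma$. Since also $\A\psi_\ell=\mu_\ell\psi_\ell$ in $\Omega$ and $\psi_\ell=0$ on $\partial\Omega$, the function $\psi_\ell$ has vanishing Cauchy data on the nonempty open set $\Gamma$; under Assumption~\ref{condition_f_regularity} (which supplies the regularity of $\kappa,q,\partial\Omega$ and connectedness of $\Omega$ needed for the unique continuation principle for second-order elliptic equations) this forces $\psi_\ell\equiv 0$, hence $\langle r_j,\phi_n\rangle=0$ whenever $\lambda_n=\mu_\ell$. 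Ranging over all $\ell$ yields $r_j=0$, which closes the induction; together with the reduction this proves the theorem.

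\emph{Main obstacle.} The decisive ingredient is the unique continuation step: no nonzero eigenfunction (nor any combination inside a single eigenspace) can have vanishing Cauchy data on the arbitrarily small open patch $\Gamma$. This is exactly what upgrades the sparse boundary data to full information, and it governs how much regularity Assumption~\ref{condition_f_regularity} must demand of $\kappa$, $q$ and $\partial\Omega$. The remaining points—justifying the eigenfunction expansions and their termwise $t$-differentiation, the Dirichlet-series linear independence when $K=\infty$, and the bookkeeping in the reduction step—are routine by comparison.
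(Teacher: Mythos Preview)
Your argument is essentially correct and takes a genuinely different route from the paper. The paper works in the Laplace domain: it first builds a pointwise boundary representation of $\partial_\nu u$ via auxiliary coefficients $c_{z,n}$ (this is where Assumption~\ref{condition_f_regularity} actually enters---it is a convergence hypothesis on the series $\sum_n c_{z,n}p_{k,n}$, \emph{not} a regularity statement about $\kappa,q,\partial\Omega$ or connectedness as you wrote), transforms to obtain an identity of the form
\[
\sum_k(e^{-t_{k-1}s}-e^{-t_ks})Q_{z,k}(s)=\sum_k(e^{-\tilde t_{k-1}s}-e^{-\tilde t_ks})\tilde Q_{z,k}(s),
\]
and then peels off one node or one spatial component at a time by multiplying through by $e^{\epsilon s}$, letting $\operatorname{Re}s\to\infty$, and invoking Liouville's theorem to force the leading coefficient to vanish. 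Your time-domain approach is more direct: the $t$-derivative of the Duhamel formula produces $e^{-\tau\A}r_j$, whose parabolic smoothing makes the eigenfunction expansion of the normal trace converge in any $H^m(\partial\Omega)$, so the $c_{z,n}$ machinery and Assumption~\ref{condition_f_regularity} are largely bypassed; you then separate modes by Dirichlet-series uniqueness on a short interval (this is exactly the content of the paper's Lemmas~\ref{lemma_uniqueness_1}--\ref{lemma_uniqueness_2}) and finish with the same unique-continuation step (the paper's Lemma~\ref{nonempty_open}). What the paper's route buys is an explicit global identity in the transform variable that makes the alternating recovery $t_0,p_1,t_1,p_2,\dots$ transparent and cleanly handles $K=\infty$; what your route buys is a shorter path that avoids Laplace transforms, Liouville's theorem, and the delicate pointwise-convergence assumption on $\partial_\nu u$. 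Two points to tighten: correct your description of Assumption~\ref{condition_f_regularity} (the regularity of $\kappa,q,\partial\Omega$ is assumed separately in the paper, and unique continuation does not come from that assumption), and give one line justifying the swap $\partial_t\partial_\nu w=\partial_\nu\partial_t w$ together with the termwise normal trace of the smoothed series---both follow from parabolic smoothing plus elliptic regularity, but they are the places where the ``appropriate regularities'' of $\kappa,q$ are actually used.
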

Theorem \ref{uniqueness} confirms that the data from any nonempty open subset of the boundary is sufficient to support the uniqueness of this inverse source problem.    

\subsection{Bayesian formulation and outline.} 
For this inverse source problem, after proving Theorem \ref{uniqueness}, it is time to consider reconstructing the unknown source numerically. The design of the algorithm will be challenging since there are too many unknowns in the source. 
The conventional methods are hard to handle problems with high dimensional unknowns and are very sensitive to the observation locations.
However, from the semi-discrete formulation,  one needs to estimate a sequence of unknown states $\{p_k(x)\}_{k = 1}^K$ recursively in time.
One choice is to estimate each state $p_k(x)$
given a sequence of observations up to $k$ in time by the posterior distribution. This task of sequential prediction based on the online observations can then be categorized as the standard Bayesian filtering problems which have been thoroughly studied in the control theory \cite{efendiev2006preconditioning, chung2020multi, stuart2010inverse, lin2021multi}.
This motivates us to formulate the inversion problem in the Bayesian framework.

The numerical reconstruction is a time series tracking problem and consists of constructing the posterior probability distribution of a sequence of states based on observations.
The resulting model typically is highly nonlinear with non-Gaussian distribution, and hence obtaining an explicit exact solution is a hard work. This reminds us of estimating the state recursively via a filtering approach \cite{jouin2016particle}. 
In this work, we are going to use the particle filter (PF) algorithm to compute the posterior of the state in a sequence.
The benefit of the particle filter algorithm is: it can handle any nonlinear properties and is capable to solve any distribution (no Gaussian assumption) \cite{van2000unscented, doucet2009tutorial}. The PF algorithms usually involve sampling from a probabilistic model which depends on the previous states and observations.
The samples are called particles and constitute
an empirical approximation of the model. 
The trajectories are then extended by sampling the next state particles based on the existing particles and new observations. Degeneration issue is a concern of the PF algorithm; but the resampling techniques can reduce the variance of the particles. In this work, we use the systematical resampling \cite{douc2005comparison, doucet2009tutorial} and please check \cite{douc2005comparison} for a comprehensive review of the resampling schemes. Another concern of the PF algorithms is the design of the proposal.
Many techniques like the linearization \cite{doucet2000sequential}, extended Kalman filter, and unscented PF \cite{van2000unscented} are among the most important options to this issue.
In this work, we use the transition prior, and the results are satisfactory. The details can be seen in Section \ref{numerical_sec}.

The contribution of this work can be summarized as follow:
\begin{enumerate}
\item Prove the uniqueness theorem rigorously, and confirm that the semi-discrete source can be recovered by the flux data from any nonempty open subset of the boundary. This conclusion can decrease the cost of the real projects and is meaningful in practical applications.  

\item To solve the inverse problem numerically, we formulate the process as a sequential prediction and hence use the filter algorithm to predict the semi-discrete source. The method enables us to solve the models with a long sequence of states and is very accurate.
\end{enumerate}

Finally, we give the outline of this paper. In Section \ref{section_pre}, we collect several useful results and show some lemmas which will be used in future proofs. In Section \ref{section_unique} we will prove the main result--Theorem \ref{uniqueness}. The proof depends on the Laplace transform and the knowledge of complex analysis. In Section \ref{numerical_sec}, we consider the numerical reconstruction of this inverse source problem. The algorithm is given,  and some satisfactory numerical results are provided. In Section \ref{section_concluding}, we list some future works for this inverse source problem.

\section{Preliminaries.}\label{section_pre}
\subsection{Eigensystem of $\A$ on $\Omega$.}\label{eigen}
For the operator $\A$ defined in \eqref{A}, we denote the eigensystem by $\{\lambda_n,\varphi_n(x)\}_{n=1}^\infty$. Since $\A$ is positive definite and self-adjoint, we have that $0<\lambda_1\le \lambda_2\le\cdots$ and $\lambda_n\to\infty$ as $n\to \infty$, and the set of eigenfunctions $\{\varphi_n(x)\}_{n=1}^\infty$  constitutes an orthonormal basis of the space $L^2(\Omega)$. 
Furthermore, if $\varphi_n$ is an eigenfunction of $\A$ corresponding to $\lambda_n$, so is $\overline{\varphi_n}$. Here $\overline{\varphi_n}$ means the complex conjugate of $\varphi_n$. This gives that the set $\{\varphi_n(x)\}_{n=1}^\infty$ coincides with $\{\overline{\varphi_n(x)}\}_{n=1}^\infty$. 
Also, from the trace theorem, we have $\{\frac{\partial\varphi_n}{\n}|_{\partial \Omega}\}_{n=1}^\infty\subset H^{1/2}(\partial \Omega)$, and the next lemma concerns the non-vanishing property of $\frac{\partial\varphi_n}{\n}|_{\partial \Omega}$.

\begin{lemma}\label{nonempty_open}
If $\Gamma$ is a nonempty open subset of $\partial \Omega$, then for each $n\in\mathbb N^+$, $\frac{\partial \varphi_n}{\n}$ can not vanish almost everywhere on $\Gamma$. 
\end{lemma}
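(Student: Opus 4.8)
The plan is to argue by contradiction via the unique continuation principle for the second‑order elliptic operator $\A$. Suppose that for some $n\in\mathbb{N}^+$ one has $\frac{\partial\varphi_n}{\n}=0$ almost everywhere on $\Gamma$. Since $\varphi_n$ already satisfies the homogeneous Dirichlet condition $\varphi_n=0$ on $\partial\Omega\supset\Gamma$, the full Cauchy data of $\varphi_n$ vanish on $\Gamma$, while in $\Omega$ the function solves the homogeneous elliptic equation $-\nabla\cdot(\kappa\nabla\varphi_n)+(q-\lambda_n)\varphi_n=0$. The goal is to show that this overdetermined boundary behavior forces $\varphi_n\equiv 0$, contradicting $\|\varphi_n\|_{L^2(\Omega)}=1$.

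First I would localize near a point $x_0\in\Gamma$. Using the boundary regularity recorded in Assumption \ref{condition_f_regularity}, I flatten $\partial\Omega$ in a neighborhood $U$ of $x_0$ so that $\Omega\cap U$ becomes a half‑ball $B^+$ and $\Gamma\cap U$ the flat face $B^0$; the transformed equation is again uniformly elliptic with Lipschitz principal part and bounded lower‑order coefficients. Then I extend $\varphi_n$ by $0$ across $B^0$ to the whole ball $B$. Because both $\varphi_n$ and the conormal derivative $\kappa\,\partial_{\vec{\bf n}}\varphi_n$ vanish on $\Gamma$, testing the weak formulation with functions supported in $B$ shows that the boundary integral over $B^0$ drops out, so the zero extension is an $H^1(B)$ weak solution of the same elliptic equation on all of $B$.

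Next, this extended solution vanishes identically on the nonempty open set $B\setminus\overline{B^+}$, so the weak unique continuation property for second‑order elliptic equations with Lipschitz principal coefficients (the standard Carleman‑estimate argument, in the spirit of Aronszajn and Jerison--Kenig) forces it to vanish on all of $B$; in particular $\varphi_n\equiv 0$ on $\Omega\cap U$. Propagating this along chains of overlapping balls inside the connected open set $\Omega$, now by interior unique continuation, yields $\varphi_n\equiv 0$ in $\Omega$, which is the desired contradiction. Hence $\frac{\partial\varphi_n}{\n}$ cannot vanish almost everywhere on $\Gamma$.

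The main obstacle is the step showing that the zero extension across $\Gamma$ is genuinely a weak solution: this is exactly where the vanishing of the conormal derivative and the regularity assumed for $\kappa$ and for $\partial\Omega$ near $\Gamma$ (Assumption \ref{condition_f_regularity}) are needed, and one must invoke a version of the unique continuation theorem whose hypotheses match that regularity. If instead one assumes $\kappa$, $q$ and the boundary near $\Gamma$ to be real‑analytic, Holmgren's uniqueness theorem delivers the same conclusion directly and bypasses the Carleman machinery.
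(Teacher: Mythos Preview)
Your proof is correct and follows essentially the same route as the paper: extend $\varphi_n$ by zero across the boundary portion $\Gamma$ where both Cauchy data vanish, verify that the extension is an $H^1$ weak solution of $(\A-\lambda_n)v=0$ on the enlarged domain, and invoke the unique continuation principle to force $\varphi_n\equiv 0$, contradicting $\|\varphi_n\|_{L^2(\Omega)}=1$. Two small remarks: the paper performs the extension directly in the original coordinates (adjoining a ball $B(x_0,r)$) without first flattening the boundary, and your citation of Assumption~\ref{condition_f_regularity} for boundary regularity is misplaced --- that assumption concerns the source coefficients $c_{z,n}$ and $p_{k,n}$, whereas the regularity of $\kappa$, $q$, and $\partial\Omega$ needed here is the implicit hypothesis that ``$\kappa$ and $q$ possess appropriate regularities to support the future proof.''
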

\begin{proof}
Assume that there exists $k\in \mathbb{N}^+$ and a nonempty open subset $\Gamma\subset\partial\Omega$ such that $\frac{\partial \varphi_k}{\n}=0$ a.e. on       $\Gamma$. This implies that we can find $r>0$ and $x_0\in\partial\Omega$ such that $\frac{\partial\varphi_k}{\n}\equiv 0$ a.e. on $B(x_0,r)\cap\partial \Omega$, where $B(x_0,r)$ is  the ball centered at $x_0$ with radius $r$ in $\mathbb{R}^d.$ 
Now define the extension $\Omega_e$ of $\Omega$ as $\Omega_e=\Omega\cup B(x_0,r)$ and the extended function $\varphi_{k,e}$ of $\varphi_k$ on $\Omega_e$ as
\begin{equation*}
 \varphi_{k,e}(x)=
 \begin{cases}
 \begin{aligned}
  &\varphi_k (x), && x\in \Omega,\\
  &0,&&x\in B(x_0,r)\setminus \Omega.
  \end{aligned}
 \end{cases}
\end{equation*}
Obviously, $\varphi_{k,e}\in H^1(\Omega_e)$. Now for any $\psi\in C_c^\infty(\Omega_e),$ we have 
\begin{equation*}
 \begin{aligned}
  \int_{\Omega_e} (\kappa\nabla\varphi_{k,e}\cdot\nabla\psi-\lambda_k\varphi_{k,e}\psi)\ dx
  &=\int_\Omega (\kappa\nabla\varphi_{k,e}\cdot\nabla\psi-\lambda_k\varphi_{k,e}\psi)\ dx +\int_{B(x_0,r)\setminus \Omega} 0\ dx\\
  &=\int_\Omega [(\A-\lambda_k)\varphi_k]\psi\ dx=0.
 \end{aligned}
\end{equation*}
Hence, $\varphi_{k,e}\in H^1(\Omega_e)$ is a weak solution of the equation 
\begin{equation*}
 (\A-\lambda_k) v(x)=0,\ x\in \Omega_e,
\end{equation*}
and we can find $B(x_0',r')\subset B(x_0,r)\subset \Omega_e$ s.t. 
$\varphi_{k,e}\equiv0$ on $B(x_0',r')$. By the unique continuation principle, we have $\varphi_{k,e}\equiv 0$ on $\Omega_e,$ i.e. $\varphi_k$ vanishes on $\Omega,$ which contradicts with the fact that $\|\varphi_k\|_{L^2(\Omega)}=1.$ Therefore we conclude that $\frac{\partial\varphi_k}{\n}$ does not vanish almost everywhere on any nonempty open subset of $\partial \Omega$, and the proof is complete.
\end{proof}

\subsection{The set $\{\xi_l\}_{l=1}^\infty$.}
In this part, we build the auxiliary set of functions $\{\xi_l\}_{l=1}^\infty$ which will be used later. 
For $\psi_1,\psi_2\in L^2(\Omega),\ \psi_3,\psi_4\in L^2(\partial \Omega),$ we define 
\begin{equation*}
\l\psi_1,\psi_2\ro=\int_\Omega\psi_1(\cdot)\overline{\psi_2(\cdot)}\ dx, 
\ \ \l\psi_3,\psi_4\rp=\int_{\partial \Omega}\kappa(\cdot)\psi_3(\cdot)\overline{\psi_4(\cdot)}\ dx,
\end{equation*}
and accordingly we give the induced norm $\|\cdot\|_{L^2(\Omega)},\|\cdot\|_{L^2(\kappa,\partial \Omega)}$ as
$$\|\cdot\|^2_{L^2(\Omega)}=\l\cdot,\cdot\ro,\ \|\cdot\|^2_{L^2(\kappa,\partial \Omega)}=\l\cdot,\cdot\rp.$$ 
Not hard to see $\|\cdot\|_{L^2(\kappa,\partial \Omega)}$ is equivalent to the usual $L^2$ norm $\|\cdot\|_{L^2(\partial \Omega)}$. The next lemma considers a density property.
\begin{lemma} 
For the eigenfunctions $\{\varphi_n\}_{n=1}^\infty$ defined in Section \ref{eigen}, we have  $\text{Span}\{\frac{\partial\varphi_n}{\n}|_{\partial \Omega}\}_{n=1}^\infty$ is dense in $L^2(\kappa,\partial \Omega)$. 
\end{lemma}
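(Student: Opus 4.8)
The plan is to prove the claimed density by a duality argument: a subspace $V$ of a Hilbert space $H$ is dense if and only if its orthogonal complement is trivial. So I would take an arbitrary $g \in L^2(\kappa,\partial\Omega)$ with $\l \frac{\partial\varphi_n}{\n}, g\rp = 0$ for every $n \in \mathbb{N}^+$, and aim to show $g = 0$ a.e. on $\partial\Omega$.

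To exploit the orthogonality relation, I would introduce, for each fixed $n$, the solution $w_n$ of the elliptic boundary value problem $\A w_n = 0$ in $\Omega$ with boundary datum $w_n = g$ on $\partial\Omega$ (interpreting $g$ suitably, or first treating smooth $g$ and then passing to the limit; one may also work with $\A w_n = \mu w_n$ for a fixed non-eigenvalue $\mu$ if the homogeneous problem is not uniquely solvable, but since $q \ge 0$ and the Dirichlet problem for $\A$ is well-posed here, $\A w = 0$ with prescribed trace has a unique solution). Then I would apply Green's second identity to the pair $(\varphi_n, w_n)$ on $\Omega$:
\begin{equation*}
\int_\Omega \big( (\A\varphi_n) \overline{w_n} - \varphi_n \overline{\A w_n}\big)\diff x = \int_{\partial\Omega} \kappa \Big( \frac{\partial \varphi_n}{\n}\, \overline{w_n} - \varphi_n\, \overline{\frac{\partial w_n}{\n}} \Big) \diff x .
\end{equation*}
Using $\A\varphi_n = \lambda_n \varphi_n$, $\A w_n = 0$, $\varphi_n|_{\partial\Omega} = 0$, and $w_n|_{\partial\Omega} = g$, the right-hand side collapses to $\l \frac{\partial\varphi_n}{\n}, g \rp$ (up to conjugation, which is harmless since we can run the argument with $\overline{g}$ or use that $\{\varphi_n\}$ is closed under conjugation), while the left-hand side equals $\lambda_n \l \varphi_n, w_n\ro$. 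Since $\lambda_n > 0$, the hypothesis forces $\l \varphi_n, w_n \ro = 0$ for all $n$. Because $\{\varphi_n\}_{n=1}^\infty$ is an orthonormal basis of $L^2(\Omega)$, this yields $w_n = 0$ in $\Omega$, hence its trace $g = w_n|_{\partial\Omega} = 0$, and by density/limiting this gives $g = 0$ for general $g \in L^2(\kappa,\partial\Omega)$. Therefore the span is dense.

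The main obstacle is the regularity and functional-analytic bookkeeping needed to make Green's identity legitimate: a generic $g \in L^2(\partial\Omega)$ produces a very weak solution $w_n$ that need not lie in $H^1(\Omega)$, so the boundary terms $\frac{\partial\varphi_n}{\n}$ and the pairing must be understood through the trace theorem and a transposition (duality) argument rather than a naive integration by parts. I would handle this by first establishing the identity for $g$ in a dense smooth subclass (say $g \in H^{1/2}(\partial\Omega)$ or $C^\infty(\partial\Omega)$), where $w_n \in H^1(\Omega)$ and all terms are classical, and then extending to all of $L^2(\kappa,\partial\Omega)$ by continuity, using the equivalence of $\|\cdot\|_{L^2(\kappa,\partial\Omega)}$ with $\|\cdot\|_{L^2(\partial\Omega)}$ noted just before the lemma. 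An alternative route that sidesteps constructing $w_n$ is to invoke Lemma \ref{nonempty_open} together with a unique continuation / Runge-type approximation argument, but the Green's identity approach is cleaner and more self-contained, so that is the one I would carry out in detail.
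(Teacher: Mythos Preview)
Your proposal is correct and follows essentially the same route as the paper: solve the homogeneous Dirichlet problem $\A w = 0$ in $\Omega$ with the orthogonal element as boundary datum, apply Green's identity against each $\varphi_n$, and use completeness of $\{\varphi_n\}$ in $L^2(\Omega)$ to conclude $w=0$, hence its trace vanishes. Two minor remarks: the extension $w$ does not depend on $n$ (so the subscript in your $w_n$ is spurious), and the paper takes the dense subclass to be $H^{3/2}(\partial\Omega)$ rather than $H^{1/2}(\partial\Omega)$, so that $w\in H^2(\Omega)$ and Green's second identity is directly justified.
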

\begin{proof}
 With the density of $H^{3/2}(\partial\Omega)$ in $L^2(\partial \Omega)$ under the norm $\|\cdot\|_{L^2(\partial\Omega)}$, it is sufficient to prove that $\tilde\psi\in H^{3/2}(\partial \Omega)$ vanishes almost everywhere on $\partial \Omega$ if $\l \tilde\psi,\frac{\partial\varphi_n}{\n}|_{\partial \Omega}\rp=0$ for each $n\in\mathbb{N}^+$.
 
We let $\psi$ satisfy the following system: 
\begin{equation*}
\begin{cases}
\begin{aligned}
    \A \psi(x)&=0, &&x\in \Omega,\\
    \psi&=\tilde \psi,&&x\in\partial \Omega.
\end{aligned}
\end{cases}
\end{equation*}
The regularity $\tilde\psi\in H^{3/2}(\partial \Omega)$ gives that $\psi\in H^2(\Omega)$, then the Green's identity can be used. Fixing  one $n\in\mathbb N^+$, we have 
\begin{equation*}
\l \A \psi,\varphi_n\ro-\l \psi, \A\varphi_n\ro 
=\l \tilde\psi,\frac{\partial\varphi_n}{\n}\rp-\l\frac{\partial\psi}{\n}, \varphi_n\rp.
\end{equation*} 
With the results $\A \psi=0$ on $\Omega$, $\varphi_n=0$ on $\partial\Omega$ and $\l\tilde\psi,\frac{\partial\varphi_n}{\n}\rp=0$, the above equality gives that $$\l \psi, \A\varphi_n\ro =\lambda_n\l \psi,\varphi_n\ro=0.$$ 
So we see that for each $n\in\mathbb N^+$, $\l\psi, \varphi_n\ro=0$. This together with the completeness of $\{\varphi_n\}_{n=1}^\infty$ in $L^2(\Omega)$ yields that $\|\psi\|_{L^2(\Omega)}=0$. 
From the definition of weak derivative, it is not hard to see that the first-order weak derivative of $\psi$ is also vanishing on $\Omega$. This gives that $\|\psi\|_{H^1(\Omega)}=0$. By the continuity of the trace operator, it holds that 
$$\|\psi|_{\partial \Omega}\|_{L^2(\partial \Omega)}=\|\tilde\psi\|_{L^2(\partial \Omega)}\le C\|\psi\|_{H^1(\Omega)}=0.$$
So we have $\tilde\psi=0$ almost everywhere on $\partial\Omega$ and the proof is complete. 
\end{proof}

Now we can build the auxiliary set $\{\xi_l\}_{l=1}^\infty$.
With the density of $\text{Span}\{\frac{\partial\varphi_n}{\n}|_{\partial \Omega}\}_{n=1}^\infty$ in $L^2(\kappa,\partial \Omega)$, we can construct the orthonormal basis $\{\tilde \xi_l\}_{l=1}^\infty$ in $L^2(\kappa,\partial \Omega)$ as follows. Firstly we set $\tilde\xi_1=\frac{\partial\varphi_1}{\n}|_{\partial \Omega}/\|\frac{\partial\varphi_1}{\n}\|_{L^2(\kappa,\partial \Omega)},$ and for $l=2,3,\cdots$, we assume that the orthonormal set $\{\tilde\xi_j\}_{j=1}^{l-1}$ has been constructed. Then we let $n_l\in\mathbb N^+$ be the smallest number such that $\frac{\partial\varphi_{n_l}}{\n}|_{\partial \Omega}\notin \s\{\tilde\xi_j\}_{j=1}^{l-1}$, and after that we can pick $\tilde\xi_l\in \s\{\frac{\partial\varphi_{n_l}}{\n}|_{\partial \Omega},\ \tilde\xi_1, \cdots, \tilde\xi_{l-1}\}$ such that 
\begin{equation*}
 \l\tilde\xi_l, \tilde\xi_j\rp=0\ \text{for}\ j=1,\cdots, l-1,\ \ \|\tilde\xi_l\|_{L^2(\kappa,\partial \Omega)}=1.
\end{equation*}
The density of $\text{Span}\{\frac{\partial\varphi_n}{\n}|_{\partial \Omega}\}_{n=1}^\infty$ in $L^2(\kappa,\partial \Omega)$ ensures that $\{\tilde\xi_l\}_{l=1}^\infty$ is an orthonormal basis in $L^2(\kappa,\partial \Omega)$. Also, from the smoothness property $\frac{\partial\varphi_n}{\n}|_{\partial  \Omega}\in H^{1/2}(\partial \Omega)$ we have 
$\{\tilde\xi_l\}_{l=1}^\infty\subset  H^{1/2}(\partial \Omega)$.

Next, for each $l\in \mathbb N^+$, let $\xi_l\in H^1( \Omega)$ be the weak solution of the following system: 
\begin{equation}\label{PDE_xi}
\begin{cases}
\begin{aligned}
  \A\xi_l(x)&=0, &&x\in \Omega,\\
    \xi_l&=\tilde \xi_l,&&x\in\partial \Omega.
\end{aligned}
\end{cases}
\end{equation}
Then we get the auxiliary set $\{\xi_l\}_{l=1}^\infty$ on $ \Omega$.

\subsection{The coefficients $\{c_{z,n}\}$ and Assumption \ref{condition_f_regularity}.} 
The proof of Theorem \ref{uniqueness} relies on Assumption \ref{condition_f_regularity}. Before to state Assumption \ref{condition_f_regularity}, we need to build the coefficients $\{c_{z,n}\}$.  
For a fixed point $z\in\partial \Omega$, we define the series 
$\psi_z^N\in H^1(\overline\Omega)$ as 
\begin{equation}\label{psi_z^N}
 \psi_z^N(x)=\sum_{l=1}^N \tilde\xi_l(z)\overline{\xi_l(x)}, \ x\in  \overline\Omega,
\end{equation}
where $\{\tilde\xi_l,\xi_l\}_{l=1}^\infty$ are defined in the above subsection. 
Then we consider the following system and denote the solutions by $\{u_z^N\}$:   
\begin{equation}\label{u_z^N}
\begin{cases}
 \begin{aligned}  
  (\partial_t+\A)u_z^N(x,t)&=0, &&(x,t)\in \Omega\times(0,\infty),\\
 u_z^N(x,t)&=0, &&(x,t)\in \partial \Omega\times(0,\infty),\\
  u_z^N (x,0)&=-\psi_z^N, &&x\in \Omega.
 \end{aligned}
\end{cases}
\end{equation}
The finite summation $\psi_z^N$ is constructed following the role of Dirac delta function, which reflects the information of the targeted function on a specific point via integration. 
With the following lemma, we can give the coefficients  $\{c_{z,n}\}$. 
\begin{lemma}\label{c_z,n}
 For each $z\in\partial \Omega$ and $n\in \mathbb N^+$, $\lim_{N\to \infty}\l \psi_z^N,\overline{\varphi_n}\ro$ exists. 
\end{lemma}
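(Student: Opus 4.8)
The plan is to recast $\l\psi_z^N,\overline{\varphi_n}\ro$ as a pairing on $\partial\Omega$ through Green's identity, and then to observe that, for each fixed $n$, only finitely many of the basis functions $\tilde\xi_l$ contribute, so that the sequence in $N$ is eventually constant and a fortiori convergent.

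First, unfolding the definition \eqref{psi_z^N} and using $\overline{\overline{\varphi_n}}=\varphi_n$ gives $\l\psi_z^N,\overline{\varphi_n}\ro=\sum_{l=1}^N\tilde\xi_l(z)\,\overline{\l\xi_l,\varphi_n\ro}$. I would then apply Green's identity to the pair $(\xi_l,\varphi_n)$, exactly as in the proof of the preceding lemma: from $\A\xi_l=0$ and $\A\varphi_n=\lambda_n\varphi_n$ in $\Omega$ together with $\varphi_n=0$ and $\xi_l=\tilde\xi_l$ on $\partial\Omega$, one gets $-\lambda_n\l\xi_l,\varphi_n\ro=\l\tilde\xi_l,\frac{\partial\varphi_n}{\n}\rp$. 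Since $\lambda_n$ is real, this yields
\begin{equation*}
\l\psi_z^N,\overline{\varphi_n}\ro=-\frac{1}{\lambda_n}\sum_{l=1}^{N}\l\tfrac{\partial\varphi_n}{\n},\tilde\xi_l\rp\,\tilde\xi_l(z).
\end{equation*}

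The heart of the argument is a structural remark about the Gram--Schmidt construction of $\{\tilde\xi_l\}$: because it runs through $\tfrac{\partial\varphi_1}{\n},\tfrac{\partial\varphi_2}{\n},\dots$ in this order, one has $\s\{\tilde\xi_l\}_{l=1}^{j}=\s\{\tfrac{\partial\varphi_m}{\n}|_{\partial\Omega}:m\le n_j\}$ with $n_j\uparrow\infty$ (if the $n_j$ stayed bounded, $\s\{\tfrac{\partial\varphi_m}{\n}\}_{m\ge1}$ would be finite-dimensional, contradicting its density in the infinite-dimensional space $L^2(\kappa,\partial\Omega)$ established above). Hence, for the fixed $n$, there is a finite $L=L(n)$ with $\tfrac{\partial\varphi_n}{\n}|_{\partial\Omega}\in\s\{\tilde\xi_l\}_{l=1}^{L}$, so that $\l\tfrac{\partial\varphi_n}{\n},\tilde\xi_l\rp=0$ for every $l>L$ and the sum above does not depend on $N$ once $N\ge L$. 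Therefore $\big\{\l\psi_z^N,\overline{\varphi_n}\ro\big\}_N$ is eventually constant, its limit exists, and in fact
\begin{equation*}
c_{z,n}:=\lim_{N\to\infty}\l\psi_z^N,\overline{\varphi_n}\ro=-\frac{1}{\lambda_n}\sum_{l=1}^{L}\l\tfrac{\partial\varphi_n}{\n},\tilde\xi_l\rp\tilde\xi_l(z)=-\frac{1}{\lambda_n}\,\tfrac{\partial\varphi_n}{\n}(z),
\end{equation*}
the last equality being the (finite) expansion of $\tfrac{\partial\varphi_n}{\n}|_{\partial\Omega}$ in the orthonormal basis $\{\tilde\xi_l\}$ of $L^2(\kappa,\partial\Omega)$, read at the point $z$.

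This finiteness is really what makes the statement work: without it one would have to handle the convergence of the reproducing-kernel-type series $\sum_{l}\l\tfrac{\partial\varphi_n}{\n},\tilde\xi_l\rp\tilde\xi_l(z)$, a genuinely delicate pointwise question, which is here bypassed entirely. The only remaining technical point --- more bookkeeping than difficulty --- is that the pointwise values $\tilde\xi_l(z),\ \tfrac{\partial\varphi_n}{\n}(z)$ and the application of Green's identity to $\xi_l$ presuppose enough regularity; under the standing assumptions on $\kappa,q$ and the smoothness of $\partial\Omega$, elliptic regularity makes the eigenfunctions smooth up to the boundary, hence each $\tfrac{\partial\varphi_m}{\n}|_{\partial\Omega}$, each finite combination $\tilde\xi_l$, and each harmonic extension $\xi_l$ are as smooth as needed. (Lemma~\ref{nonempty_open} is not used in this proof, though it guarantees that the function $z\mapsto c_{z,n}$ obtained here is not identically zero.)
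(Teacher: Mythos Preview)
Your proof is correct and follows essentially the same route as the paper's: both reduce $\l\psi_z^N,\overline{\varphi_n}\ro$ via Green's identity to $-\lambda_n^{-1}\sum_{l=1}^N\tilde\xi_l(z)\l\tfrac{\partial\varphi_n}{\n},\tilde\xi_l\rp$ and then observe that the Gram--Schmidt construction forces all but finitely many terms to vanish, so the sequence is eventually constant. Your write-up is in fact a bit more explicit than the paper's --- you spell out why $n_l\uparrow\infty$ and you push on to the closed form $c_{z,n}=-\lambda_n^{-1}\tfrac{\partial\varphi_n}{\n}(z)$, which the paper only derives later (in the proof of Lemma~\ref{lemma_uniqueness_1}).
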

\begin{proof}
 Firstly from Green's identities we have 
 \begin{equation*}
 \begin{aligned}
  \l \psi_z^N,\overline{\varphi_n}\ro&=\lambda_n^{-1}\l\psi_z^N, \A\overline{\varphi_n}\ro \\
  &=\lambda_n^{-1}\Big( \l \A\psi_z^N,\overline{\varphi_n}\ro-\l\psi_z^N,\frac{\partial\overline{\varphi_n}}{\n}\rp+\l\frac{\partial\psi_z^N}{\n},\overline{\varphi_n}\rp\Big) \\
  &=-\lambda_n^{-1}\sum_{l=1}^N \tilde\xi_l(z)\l\frac{\partial\varphi_n}{\n}, \tilde\xi_l\rp=:c_{z,n}^N,
  \end{aligned}
 \end{equation*}
where the system \eqref{PDE_xi} and the boundary condition of $\varphi_n$ are used. From the definition of $\{\tilde\xi_l\}_{l=1}^\infty$, we see that                $\l\frac{\partial\varphi_n}{\n}, \tilde\xi_l\rp=0$ for large $l$. Hence if $N$ is sufficiently large, the value $\lambda_n^{-1}\sum_{l=1}^N \tilde\xi_l(z)\l\frac{\partial\varphi_n}{\n}, \tilde\xi_l\rp$ will not depend on the value of $N$. This means that $\lim_{N\to \infty}\l \psi_z^N,\overline{\varphi_n}\ro$ exists and the proof is complete. 
\end{proof}

Now we define 
\begin{equation}\label{c_z,n}
c_{z,n}:=\lim_{N\to \infty}\l\psi_z^N,\overline{\varphi_n}\ro
=\lim_{N\to \infty}c_{z,n}^N,\quad  
p_{k,n}:=\l p_k(\cdot),\varphi_n(\cdot)\ro,
\end{equation}
it is time to state Assumption \ref{condition_f_regularity}.   
\begin{assumption}\label{condition_f_regularity}
\hfill
\begin{itemize}
\item [(a)] For $k\in\{1,\cdots,K\}$ and a.e. $z\in\partial\Omega$, there exists $C>0$ which is independent of $N$ such that $\sum_{n=1}^\infty  |c^N_{z,n}p_{k,n}|<C<\infty$ for each $N\in\mathbb N^+$. 
\item [(b)] For a.e. $z\in \partial \Omega$, it holds that $\sum_{k=1}^K\sum_{n=1}^\infty |c_{z,n}p_{k,n}|<\infty$.
\item [(c)] For a.e. $t\in[0,\infty)$, the spatial components $\{p_k(x)\}_{k=1}^K$ are smooth enough such that the series  
$\sum_{l=1}^\infty\tilde\xi_l(x)\l\frac{\partial u}{\n}(\cdot, t),\tilde\xi_l(\cdot)\rp$ converges to $\frac{\partial u}{\n}(x, t)$ pointwisely for a.e. $x\in\partial\Omega$.  
\end{itemize}
\end{assumption}

\section{Uniqueness theorem.}
\label{section_unique}
\subsection{Representation for the boundary flux data.}
In this part, we will consider how to present the boundary flux $\frac{\partial u}{\n}|_{\partial \Omega}$ by the unknown source. 
Firstly, we give the next lemma.  
\begin{lemma}\label{data_1}
 For a.e. $z\in\partial\Omega$, we define $w_z^N=u_z^N+\psi_z^N,$ where $u_z^N$ and $\psi_z^N$ are given by \eqref{u_z^N} and \eqref{psi_z^N}, respectively. Then we have: 
 \begin{align*}
  &\text{for a.e.}\ t\in[0,t_0], \\
  &\qquad\qquad-\frac{\partial u}{\n}(z,t)=0;\\
  &\text{for a.e.}\ t\in (t_{m_0-1},t_{m_0}]\ \text{with}\ 1\le m_0\le K, \\
  &\qquad\qquad-\frac{\partial u}{\n}(z,t)=\lim_{N\to\infty}\int_\Omega p_{m_0}(x) w_z^N(x,t-t_{m_0-1})\ dx\\
  &\qquad\qquad\qquad\qquad\qquad+\sum_{m=1}^{m_0-1}\lim_{N\to\infty}\int_\Omega p_m(x) [w_z^N(x,t-t_{m-1})- w_z^N(x,t-t_m)]\ dx;\\
  &\text{for a.e.}\ t\in(t_K,\infty)\ \text{(when K is finite)}, \\
  &\qquad\qquad -\frac{\partial u}{\n}(z,t)=\sum_{m=1}^K\lim_{N\to\infty}\int_\Omega p_m(x) [w_z^N(x,t-t_{m-1})- w_z^N(x,t-t_m)]\ dx.
 \end{align*}
\end{lemma}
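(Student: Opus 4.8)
The plan is to expand everything in the eigenbasis $\{\lambda_n,\varphi_n\}$ of $\A$, compute both sides explicitly, and match them block by block in time. First I would write the solution of \eqref{PDE} as $u(x,t)=\sum_{n}a_n(t)\varphi_n(x)$ with $a_n(t)=\l u(\cdot,t),\varphi_n\ro$; separating variables turns \eqref{PDE} into the scalar ODEs $a_n'+\lambda_n a_n=\sum_k p_{k,n}\chi_{t\in[t_{k-1},t_k)}$, $a_n(0)=0$, whose variation‑of‑parameters solution on $(t_{m_0-1},t_{m_0}]$ is
\[
\lambda_n a_n(t)=\sum_{k=1}^{m_0-1}p_{k,n}\bigl(e^{-\lambda_n(t-t_k)}-e^{-\lambda_n(t-t_{k-1})}\bigr)+p_{m_0,n}\bigl(1-e^{-\lambda_n(t-t_{m_0-1})}\bigr),
\]
with the obvious variants $a_n\equiv 0$ for $t\in[0,t_0]$ and the sum running up to $k=K$ for $t>t_K$ when $K<\infty$. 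Assumption \ref{condition_f} (a) guarantees that for every finite $t$ only finitely many blocks are involved, so these are finite sums in $k$.

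Next I would transport this expansion to the boundary. For $t>0$ one has $\frac{\partial u}{\n}(\cdot,t)=\sum_n a_n(t)\frac{\partial\varphi_n}{\n}$ in $L^2(\kappa,\partial\Omega)$, and Assumption \ref{condition_f_regularity} (c) recovers the pointwise value at a.e.\ $z\in\partial\Omega$ from the $\{\tilde\xi_l\}$‑coefficients: $\frac{\partial u}{\n}(z,t)=\sum_l\tilde\xi_l(z)\,\l\frac{\partial u}{\n}(\cdot,t),\tilde\xi_l\rp=\sum_l\tilde\xi_l(z)\sum_n a_n(t)\l\frac{\partial\varphi_n}{\n},\tilde\xi_l\rp$. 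Interchanging the two summations and recalling from \eqref{c_z,n} together with the stabilization of the partial sums established in Lemma \ref{c_z,n} that $\sum_l\tilde\xi_l(z)\l\frac{\partial\varphi_n}{\n},\tilde\xi_l\rp=-\lambda_n c_{z,n}$, I obtain the compact identity $-\frac{\partial u}{\n}(z,t)=\sum_n\lambda_n c_{z,n}\,a_n(t)$, the series converging absolutely by Assumption \ref{condition_f_regularity} (b).

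In parallel I would evaluate the interior integrals on the right‑hand side. Since $u_z^N(\cdot,\tau)=-e^{-\tau\A}\psi_z^N$ for the Dirichlet heat semigroup generated by $-\A$, we get $w_z^N(\cdot,\tau)=\psi_z^N-e^{-\tau\A}\psi_z^N$; combining the computation in Lemma \ref{c_z,n} with the fact that $\{\varphi_n\}$ is stable under conjugation gives $\psi_z^N=\sum_n c_{z,n}^N\,\overline{\varphi_n}$ in $L^2(\Omega)$, hence $w_z^N(\cdot,\tau)=\sum_n c_{z,n}^N\,(1-e^{-\lambda_n\tau})\,\overline{\varphi_n}$. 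Pairing with $p_m$, using Assumption \ref{condition_f_regularity} (a) to interchange summation and integration, and then letting $N\to\infty$ with Assumptions \ref{condition_f_regularity} (a)--(b), one reaches $\lim_{N\to\infty}\int_\Omega p_m(x)\,w_z^N(x,\tau)\,dx=\sum_n c_{z,n}\,p_{m,n}\,(1-e^{-\lambda_n\tau})$.

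It remains to match the two sides. Using the elementary identity $e^{-\lambda_n(t-t_k)}-e^{-\lambda_n(t-t_{k-1})}=(1-e^{-\lambda_n(t-t_{k-1})})-(1-e^{-\lambda_n(t-t_k)})$, the contribution of the block $[t_{k-1},t_k)$ to $\sum_n\lambda_n c_{z,n}a_n(t)$ is exactly $\lim_N\int_\Omega p_k\,[w_z^N(\cdot,t-t_{k-1})-w_z^N(\cdot,t-t_k)]\,dx$ for $k<m_0$, while the running block $[t_{m_0-1},t)$ contributes $\lim_N\int_\Omega p_{m_0}\,w_z^N(\cdot,t-t_{m_0-1})\,dx$; summing over $k$ gives the claimed identity for $t\in(t_{m_0-1},t_{m_0}]$, and the cases $t\in[0,t_0]$ (where $a_n\equiv 0$) and $t>t_K$ follow verbatim from the corresponding forms of $a_n(t)$. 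The main obstacle is justifying the three interchanges of limits—$\sum_l\leftrightarrow\sum_n$ in the boundary expansion, $\sum_n\leftrightarrow\int_\Omega$ in the interior integral, and $\lim_N\leftrightarrow\sum_n$—which is precisely the role of Assumption \ref{condition_f_regularity} combined with the finiteness/stabilization of the sums defining $c_{z,n}$; a secondary bookkeeping issue is tracking complex conjugates, since the eigenfunctions $\varphi_n$ need not be real.
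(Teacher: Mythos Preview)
Your proposal is correct but follows a genuinely different route from the paper. The paper argues by duality: it observes that $w_z^N$ solves $(\partial_t+\A)w_z^N=0$ with boundary data $\psi_z^N|_{\partial\Omega}$ and zero initial data, then computes the double integral $\int_0^t\int_\Omega F(x,\tau)\,w_z^N(x,t-\tau)\,dx\,d\tau$ two ways---directly (producing the block sums in $p_m$) and via Green's identities in space together with an integration by parts in time (producing $-\int_0^t\sum_{l=1}^N\tilde\xi_l(z)\,\l\frac{\partial u}{\n}(\cdot,\tau),\tilde\xi_l\rp\,d\tau$). Differentiating in $t$ and invoking Assumption~\ref{condition_f_regularity}(c) then yields the lemma; no eigenexpansion of $u$ or $w_z^N$ appears, and the spectral representation is deferred to Corollary~\ref{data_formula}. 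Your approach instead expands both sides explicitly in the eigenbasis and matches the resulting series, essentially fusing Lemma~\ref{data_1} with the first half of Corollary~\ref{data_formula}. This is more transparent computationally and gets to the formula $-\frac{\partial u}{\n}(z,t)=\sum_n\lambda_n c_{z,n}a_n(t)$ in one stroke, but it requires the extra step of knowing that $\sum_n a_n(t)\frac{\partial\varphi_n}{\n}$ really converges to $\frac{\partial u}{\n}(\cdot,t)$ in $L^2(\kappa,\partial\Omega)$ (so that its $\tilde\xi_l$-coefficients are $\sum_n a_n(t)\l\frac{\partial\varphi_n}{\n},\tilde\xi_l\rp$); this follows from $H^2$-regularity of $u(\cdot,t)$ but is not entirely free. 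The paper's duality argument sidesteps that point by working only with the smooth test function $w_z^N$ at finite $N$, and your $\sum_l\leftrightarrow\sum_n$ interchange, once routed through the partial sums $c_{z,n}^N$ and Assumption~\ref{condition_f_regularity}(a), is in fact the same dominated-convergence step the paper performs later in Corollary~\ref{data_formula}.
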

\begin{proof}
 From the definition of $\psi_z^N$, we see that 
 $ (\partial_t+\A)\psi_z^N=0.$ This result and \eqref{u_z^N} show that $w_z^N$ satisfies the equation 
\begin{equation*}
 (\partial_t+\A)w_z^N=0, \  (x,t)\in \Omega\times(0,\infty),
\end{equation*}
with the boundary condition $w_z^N|_{\partial \Omega}=\psi_z^N|_{\partial \Omega}$ and the initial condition $w_z^N(x,0)=0$. Therefore, Green's identities give that for each $v\in H_0^1(\Omega)$, 
\begin{equation*}
 \int_\Omega (\partial_t+q)w_z^N(x,t) v(x)
 +\kappa(x)\nabla w_z^N(x,t) \cdot \nabla v(x)\ dx=0,\ \ t\in (0,\infty).
\end{equation*}

From \eqref{PDE}, we obtain 
\begin{equation*}
 \begin{aligned}
  \int_0^t \int_\Omega F(x,\tau) w_z^N(x,t-\tau)\ dx\ d\tau=\int_0^t\int_\Omega (\partial_t+\A)u(x,\tau)\ w_z^N(x,t-\tau)\ dx\ d\tau. 
 \end{aligned}
\end{equation*}
Green's identities and the vanishing initial conditions of $u$ and $w_z^N$ give that 
\begin{equation*}
 \begin{aligned}
 \int_0^t\int_\Omega (\partial_t+q) u(x,\tau)\ w_z^N(x,t-\tau)\ dx\ d\tau
 =& \int_0^t\int_\Omega (\partial_t+q) w_z^N(x,t-\tau)\ u(x,\tau)\ dx\ d\tau,\\
 \int_0^t\int_\Omega -\nabla\cdot(\kappa\nabla u(x,\tau))\ w_z^N(x,t-\tau)\ dx\ d\tau
 =& \int_0^t\int_\Omega \kappa(x)\nabla u(x,\tau)\cdot\nabla w_z^N(x,t-\tau)\ dx\ d\tau \\
 &-\int_0^t \int_{\partial \Omega} \kappa(x)\frac{\partial u}{\n}(x,\tau)\psi_z^N(x)\ dx\ d\tau.
\end{aligned}
\end{equation*}
Hence we have  
 \begin{align*}
  &\int_0^t \int_\Omega F(x,\tau) w_z^N(x,t-\tau)\ dx\ d\tau\\
  &=\int_0^t\int_\Omega (\partial_t+q) w_z^N(x,t-\tau)\ u(x,\tau)
  +\kappa(x)\nabla w_z^N(x,t-\tau)\cdot\nabla u(x,\tau) \ dx\ d\tau\\
 &\quad -\int_0^t \int_{\partial \Omega} \kappa(x)\frac{\partial u}{\n}(x,\tau)
 \psi_z^N(x)\ dx\ d\tau\\
 &=-\int_0^t \sum_{l=1}^N \tilde\xi_l(z)\ \big\l\frac{\partial u}{\n}(\cdot,\tau),\tilde\xi_l(\cdot) \big\rp \ d\tau.
 \end{align*}

 For $t\in (t_{m_0-1},t_{m_0}]$ with $1\le m_0\le K$, the left side of the above equality can be written as 
 \begin{align*}
  \int_0^t \int_\Omega F(x,t-\tau) w_z^N(x,\tau)\ dx\ d\tau
  =&\int_0^{t-t_{m_0-1}} \int_\Omega p_{m_0}(x) w_z^N(x,\tau)\ dx\ d\tau\\
  &+\sum_{m=1}^{m_0-1}\int_{t-t_m}^{t-t_{m-1}} \int_\Omega p_m(x) w_z^N(x,\tau)\ dx\ d\tau.
 \end{align*}
 Similarly, if $t>t_K$, we have 
  \begin{align*}
  \int_0^t \int_\Omega F(x,t-\tau) w_z^N(x,\tau)\ dx\ d\tau
  =\sum_{m=1}^K\int_{t-t_m}^{t-t_{m-1}} \int_\Omega p_m(x) w_z^N(x,\tau)\ dx\ d\tau.
 \end{align*}
 Then for the above equality, differentiating at time $t$ gives that for $t\in (t_{m_0-1},t_{m_0}]$ with $1\le m_0\le K$, 
 \begin{align*}
  -\sum_{l=1}^N \tilde\xi_l(z)\ \big\l\frac{\partial u}{\n}(\cdot,t),\tilde\xi_l(\cdot) \big\rp =&\int_\Omega p_{m_0}(x) w_z^N(x,t-t_{m_0-1})\ dx\\
  &+\sum_{m=1}^{m_0-1}\int_\Omega p_m(x) [w_z^N(x,t-t_{m-1})- w_z^N(x,t-t_m)]\ dx,
 \end{align*}
and for $t>t_K$ (From Assumption \ref{condition_f}, we see that $K$ is finite), 
 \begin{equation*}
  -\sum_{l=1}^N \tilde\xi_l(z)\ \big\l\frac{\partial u}{\n}(\cdot,t),\tilde\xi_l(\cdot) \big\rp=\sum_{m=1}^K\int_\Omega p_m(x) [w_z^N(x,t-t_{m-1})- w_z^N(x,t-t_m)]\ dx. 
 \end{equation*}

From Assumption \ref{condition_f_regularity} $(c)$ we have for a.e. $t>t_K$, 
 \begin{equation*}
  -\frac{\partial u}{\n}(z,t)=\sum_{m=1}^K\lim_{N\to\infty}\int_\Omega p_m(x) [w_z^N(x,t-t_{m-1})- w_z^N(x,t-t_m)]\ dx, 
 \end{equation*}
 and for a.e. $t\in (t_{m_0-1},t_{m_0}]$ with $1\le m_0\le K$, 
 \begin{align*}
  -\frac{\partial u}{\n}(z,t)=&\lim_{N\to\infty}\int_\Omega p_{m_0}(x) w_z^N(x,t-t_{m_0-1})\ dx\\
  &+\sum_{m=1}^{m_0-1}\lim_{N\to\infty}\int_\Omega p_m(x) [w_z^N(x,t-t_{m-1})- w_z^N(x,t-t_m)]\ dx.
 \end{align*}
 
 Assumption \ref{condition_f} $(a)$ ensures that for a fixed $t$, the summations in the right sides of the above results are finite. Therefore, the order of summation and limit can be exchanged. Moreover, the proof for $t\in[0,t_0]$ is trivial. The proof is complete. 
\end{proof}

From the above lemma, the following corollary can be deduced.
\begin{corollary}\label{data_formula}
For a.e. $z\in\partial \Omega$ and $t\in(0,\infty)$, it holds that 
\begin{equation*}
 -\int_0^t \frac{\partial u}{\n}(z, \tau)\ d\tau
 =\int_0^t \sum_{k=1}^K \chi_{{}_{t-\tau\in [t_{k-1},t_k)}} 
 \Big[\sum_{n=1}^\infty c_{z,n}p_{k,n} 
 (1-e^{-\lambda_n\tau})\Big]\ d\tau.
 \end{equation*}
 Here $p_{k,n}$ and $c_{z,n}$ are defined in \eqref{c_z,n}.  
\end{corollary}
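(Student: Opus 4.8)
The plan is to integrate the three–case formula for $-\frac{\partial u}{\n}(z,t)$ supplied by Lemma \ref{data_1} in the time variable, and then to replace every occurrence of $\lim_{N\to\infty}\int_\Omega p_k(x)w_z^N(x,s)\,dx$ by the explicit eigenfunction series $\sum_{n=1}^\infty c_{z,n}p_{k,n}(1-e^{-\lambda_n s})$.

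To produce that series I would first expand $w_z^N$ itself. Since $\A\xi_l=0$ forces $\A\overline{\xi_l}=0$ ($\kappa,q$ being real), the function $\psi_z^N$ is weakly $\A$-harmonic, so writing $w_z^N=\psi_z^N+u_z^N$ with $u_z^N(\cdot,0)=-\psi_z^N$ solving the homogeneous heat equation gives, in $L^2(\Omega)$,
\begin{equation*}
 w_z^N(x,\tau)=\psi_z^N(x)-\sum_{n=1}^\infty \l\psi_z^N,\varphi_n\ro\, e^{-\lambda_n\tau}\varphi_n(x)
 =\sum_{n=1}^\infty \l\psi_z^N,\varphi_n\ro\,(1-e^{-\lambda_n\tau})\,\varphi_n(x).
\end{equation*}
Pairing against $p_k$ and recalling from the proof of Lemma \ref{c_z,n} that $c_{z,n}^N=\l\psi_z^N,\overline{\varphi_n}\ro$ — taking the $\varphi_n$ real, which is permitted since $\A$ has real coefficients, or else tracking the conjugation through the symmetry $\{\varphi_n\}=\{\overline{\varphi_n}\}$ — yields $\int_\Omega p_k(x)w_z^N(x,\tau)\,dx=\sum_n c_{z,n}^N p_{k,n}(1-e^{-\lambda_n\tau})$. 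Letting $N\to\infty$, Lemma \ref{c_z,n} gives $c_{z,n}^N\to c_{z,n}$ for each fixed $n$, and Assumption \ref{condition_f_regularity}(a)--(b) licenses interchanging this limit with the $n$-summation, so $\lim_{N\to\infty}\int_\Omega p_k(x)w_z^N(x,\tau)\,dx=\sum_n c_{z,n}p_{k,n}(1-e^{-\lambda_n\tau})$.

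With this identity I would substitute into Lemma \ref{data_1} and integrate over $\tau\in(0,t)$: for $t\in(t_{m_0-1},t_{m_0}]$ the case-split sum integrates to a finite sum of integrals of terms like $\sum_n c_{z,n}p_{m,n}(1-e^{-\lambda_n(\tau-t_{m-1})})$ over subintervals of $(0,t)$, and the change of variables $\sigma=t-\tau$ together with the bookkeeping of which window $[t_{k-1},t_k)$ contains $\sigma$ reassembles precisely $\int_0^t\sum_{k=1}^K\chi_{t-\tau\in[t_{k-1},t_k)}\big[\sum_n c_{z,n}p_{k,n}(1-e^{-\lambda_n\tau})\big]\,d\tau$, the range $\sigma<t_0$ contributing nothing, in line with $-\frac{\partial u}{\n}(z,\cdot)\equiv0$ on $[0,t_0]$; the finite-$K$, $t>t_K$ case and the trivial $t\le t_0$ case are identical. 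Equivalently, and with less bookkeeping, one may bypass the differentiation step of Lemma \ref{data_1}: its proof establishes, before differentiating, $\int_0^t\int_\Omega F(x,t-\tau)w_z^N(x,\tau)\,dx\,d\tau=-\int_0^t\sum_{l=1}^N\tilde\xi_l(z)\l\frac{\partial u}{\n}(\cdot,\tau),\tilde\xi_l\rp\,d\tau$; Assumption \ref{condition_f_regularity}(c) sends the right-hand side to $-\int_0^t\frac{\partial u}{\n}(z,\tau)\,d\tau$, while expanding $F(x,t-\tau)=\sum_k p_k(x)\chi_{t-\tau\in[t_{k-1},t_k)}$ and inserting the series for $\int_\Omega p_kw_z^N$ turns the left-hand side into the claimed expression after passing $N\to\infty$.

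I expect the only genuine work to be the justification of the interchanges of limit, sum, and integral, which is exactly what Assumption \ref{condition_f_regularity} is tailored for: part (a) gives an $N$-uniform absolute bound — note that at each $\tau$ at most one indicator $\chi_{t-\tau\in[t_{k-1},t_k)}$ is nonzero, so the integrand is dominated by a fixed constant — needed to pass $N\to\infty$ under the $\tau$-integral; part (b) supplies the summability needed to pass it inside the $n$-series; and part (c) identifies $\sum_l\tilde\xi_l(z)\l\frac{\partial u}{\n}(\cdot,\tau),\tilde\xi_l\rp$ with $\frac{\partial u}{\n}(z,\tau)$ pointwise. Everything else reduces to Green's identities already recorded in Lemma \ref{data_1} and an elementary change of variables.
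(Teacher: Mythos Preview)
Your primary route (Route A: substitute the series identity for $\lim_N\int_\Omega p_k w_z^N$ into Lemma \ref{data_1} and integrate in $\tau$) is exactly what the paper does; the paper simply carries out the ``bookkeeping'' you allude to in full, splitting $-\int_0^t\frac{\partial u}{\n}(z,\tau)\,d\tau$ into four blocks $S_1+S_2+S_3+S_4$ corresponding to the piecewise structure of the source and then telescoping.  Your derivation of $\lim_N\int_\Omega p_k w_z^N(\cdot,\tau)\,dx=\sum_n c_{z,n}p_{k,n}(1-e^{-\lambda_n\tau})$ via the eigenfunction expansion of $w_z^N$ and Assumption \ref{condition_f_regularity}(a) is also identical to the paper's.

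Your Route B --- going back to the undifferentiated convolution identity in the proof of Lemma \ref{data_1} and passing $N\to\infty$ directly --- is a genuinely cleaner reorganisation that would replace the entire $S_1,\dots,S_4$ computation by a one-line substitution.  There is, however, a small gap in your justification: on the left-hand side you correctly note that Assumption \ref{condition_f_regularity}(a) and the disjointness of the indicators give an $N$-uniform integrable bound, but on the right-hand side you invoke only Assumption \ref{condition_f_regularity}(c), which is merely pointwise convergence of $\sum_{l=1}^N\tilde\xi_l(z)\l\frac{\partial u}{\n}(\cdot,\tau),\tilde\xi_l\rp$ and does not by itself license the interchange $\lim_N\int_0^t=\int_0^t\lim_N$.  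To close this you must either supply a dominating function for the partial sums --- which effectively forces you back through the differentiation step of Lemma \ref{data_1}, since that is what identifies the right-hand integrand with the (bounded) left-hand one --- or argue more carefully that convergence of the integrals plus pointwise convergence of the integrands yields the desired identification.  Either fix is routine, but it should be stated.
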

\begin{proof} 
Firstly, let us evaluate the limit $\lim_{N\to \infty}\l  p_k(\cdot), \overline{w_z^N(\cdot,t)}\ro$ for $k=1,\cdots, K$ and $t\in(0,\infty)$.  
 Fixing $N\in \mathbb{N}^+$, from the definition of $\psi_z^N$ we have 
 $\psi_z^N\in L^2(\Omega)$. So the Fourier expansion of $\psi_z^N$ can be given as $\psi_z^N=\sum_{n=1}^\infty c_{z,n}^N \overline{\varphi_n}$, where $c_{z,n}^N$ is defined in the proof of Lemma \ref{c_z,n}. Moreover, from $w_z^N=u_z^N+\psi_z^N$ and equation \eqref{u_z^N}, we have  
 \begin{equation*}
  w_z^N(x,t)=\sum_{n=1}^\infty c_{z,n}^N (1-e^{-\lambda_nt})\overline{\varphi_n(x)}.
 \end{equation*}
 The above result together with the regularity $\psi_z^N\in L^2(\Omega)$ yields that $w_z^N(\cdot,t)\in L^2(\Omega)$ for $t\in [0,\infty)$. Also recall that $p_k\in L^2(\Omega)$, then   
 \begin{equation*}
 \l p_k(\cdot), \overline{w_z^N(\cdot,t)}\ro
  = \sum_{n=1}^\infty c_{z,n}^N p_{k,n} (1-e^{-\lambda_nt}).
 \end{equation*}
 The condition in Assumption \ref{condition_f_regularity} implies that the Dominated Convergence Theorem can be applied to the above series. So we have  
\begin{equation*}
 \lim_{N\to\infty}\sum_{n=1}^\infty c_{z,n}^N p_{k,n} (1-e^{-\lambda_nt})= \sum_{n=1}^\infty \lim_{N\to\infty}c_{z,n}^N p_{k,n} (1-e^{-\lambda_nt}).
\end{equation*}
 From \eqref{c_z,n}, we have  
 \begin{equation*}
 \lim_{N\to\infty}\l p_k(\cdot), \overline{w_z^N(\cdot,t)}\ro= \lim_{N\to\infty}\sum_{n=1}^\infty c_{z,n}^N p_{k,n} (1-e^{-\lambda_nt})=\sum_{n=1}^\infty c_{z,n} p_{k,n} (1-e^{-\lambda_nt}).
 \end{equation*}
 
Now we claim that for $t>0$,  
\begin{equation}\label{claim}
 -\int_0^t \frac{\partial u}{\n}(z, \tau)\ d\tau
 =\int_0^t \sum_{k=1}^K \chi_{{}_{t-\tau\in [t_{k-1},t_k)}} 
 \Big[\sum_{n=1}^\infty c_{z,n}p_{k,n} (1-e^{-\lambda_n\tau})\Big]\ d\tau.
 \end{equation}
 If $t\le t_0$, from Lemma \ref{data_1} we have $\frac{\partial u}{\n}(z, \tau)=0$ for $\tau\le t$. This means \eqref{claim} is valid. 
 
For $t\in (t_{m_0-1},t_{m_0}]$ with $1\le m_0\le K$, Lemma \ref{data_1} yields that 
\begin{align*}
 -\int_0^t \frac{\partial u}{\n}(z, \tau)\ d\tau
 =&-\sum_{m=1}^{m_0-1}\int_{t_{m-1}}^{t_m} \frac{\partial u}{\n}(z, \tau)\ d\tau
 -\int_{t_{m_0-1}}^t \frac{\partial u}{\n}(z, \tau)\ d\tau\\
 =&\sum_{m=1}^{m_0-1}\int_{t_{m-1}}^{t_m} \sum_{l=1}^{m-1}\sum_{n=1}^\infty c_{z,n} p_{l,n} (e^{-\lambda_n(\tau-t_l)}-e^{-\lambda_n(\tau-t_{l-1})})\ d\tau\\
 &+\sum_{m=1}^{m_0-1}\int_{t_{m-1}}^{t_m}\sum_{n=1}^\infty c_{z,n} p_{m,n} (1-e^{-\lambda_n(\tau-t_{m-1})}) \ d\tau\\
 &+\int_{t_{m_0-1}}^t \sum_{l=1}^{m_0-1}\sum_{n=1}^\infty c_{z,n} p_{l,n} (e^{-\lambda_n(\tau-t_l)}-e^{-\lambda_n(\tau-t_{l-1})})\ d\tau\\
 &+\int_{t_{m_0-1}}^t \sum_{n=1}^\infty c_{z,n} p_{m_0,n} (1-e^{-\lambda_n(\tau-t_{m_0-1})}) \ d\tau\\
 =:&S_1+S_2+S_3+S_4.
\end{align*}
The straightforward computation gives that 
\begin{align*}
 S_1=&\sum_{m=1}^{m_0-1}\sum_{l=1}^{m-1}\int_{t_{m-1}-t_{l-1}}^{t_m-t_{l-1}} \sum_{n=1}^\infty c_{z,n} p_{l,n} (1-e^{-\lambda_n\tau})\ d\tau-\sum_{m=1}^{m_0-1}\sum_{l=1}^{m-1}\int_{t_{m-1}-t_l}^{t_m-t_l} \sum_{n=1}^\infty c_{z,n} p_{l,n} (1-e^{-\lambda_n\tau})\ d\tau\\
 =&\sum_{l=1}^{m_0-2}\sum_{m=l+1}^{m_0-1}\int_{t_{m-1}-t_{l-1}}^{t_m-t_{l-1}} \sum_{n=1}^\infty c_{z,n} p_{l,n} (1-e^{-\lambda_n\tau})\ d\tau
 -\sum_{l=1}^{m_0-2}\sum_{m=l+1}^{m_0-1}\int_{t_{m-1}-t_l}^{t_m-t_l} \sum_{n=1}^\infty c_{z,n} p_{l,n} (1-e^{-\lambda_n\tau})\ d\tau\\
 =&\sum_{l=1}^{m_0-2}\int_{t_l-t_{l-1}}^{t_{m_0-1}-t_{l-1}} \sum_{n=1}^\infty c_{z,n} p_{l,n} (1-e^{-\lambda_n\tau})\ d\tau
 -\sum_{l=1}^{m_0-2}\int_0^{t_{m_0-1}-t_l} \sum_{n=1}^\infty c_{z,n} p_{l,n} (1-e^{-\lambda_n\tau})\ d\tau.
\end{align*}
Similarly, we have 
\begin{align*}
 S_2=&\sum_{m=1}^{m_0-1}\int_0^{t_m-t_{m-1}}\sum_{n=1}^\infty c_{z,n} p_{m,n} (1-e^{-\lambda_n\tau}) \ d\tau,\\
 S_3=&\sum_{l=1}^{m_0-1}\int_{t_{m_0-1}-t_{l-1}}^{t-t_{l-1}} \sum_{n=1}^\infty c_{z,n} p_{l,n} (1-e^{-\lambda_n\tau})\ d\tau
 -\sum_{l=1}^{m_0-1}\int_{t_{m_0-1}-t_l}^{t-t_l} \sum_{n=1}^\infty c_{z,n} p_{l,n} (1-e^{-\lambda_n\tau})\ d\tau,\\
 S_4=&\int_0^{t-t_{m_0-1}} \sum_{n=1}^\infty c_{z,n} p_{m_0,n} (1-e^{-\lambda_n\tau}) \ d\tau=\int_0^t \chi_{{}_{t-\tau\in [t_{m_0-1},t_{m_0})}} \sum_{n=1}^\infty c_{z,n} p_{m_0,n} (1-e^{-\lambda_n\tau}) \ d\tau.
\end{align*}
These give that 
\begin{align*}
 S_1+S_2+S_3=&\sum_{l=1}^{m_0-2}\int_0^{t-t_{l-1}} \sum_{n=1}^\infty c_{z,n} p_{l,n} (1-e^{-\lambda_n\tau})\ d\tau
 -\sum_{l=1}^{m_0-2}\int_0^{t-t_l} \sum_{n=1}^\infty c_{z,n} p_{l,n} (1-e^{-\lambda_n\tau})\ d\tau\\
 &+\int_0^{t-t_{m_0-2}} \sum_{n=1}^\infty c_{z,n} p_{m_0-1,n} (1-e^{-\lambda_n\tau}) \ d\tau
 -\int_0^{t-t_{m_0-1}} \sum_{n=1}^\infty c_{z,n} p_{m_0-1,n} (1-e^{-\lambda_n\tau}) \ d\tau\\
 =&\sum_{l=1}^{m_0-1}\int_{t-t_l}^{t-t_{l-1}} \sum_{n=1}^\infty c_{z,n} p_{l,n} (1-e^{-\lambda_n\tau})\ d\tau\\
 =&\int_0^t \sum_{l=1}^{m_0-1} \chi_{{}_{t-\tau\in [t_{l-1},t_l)}} 
 \Big[\sum_{n=1}^\infty c_{z,n}p_{l,n} (1-e^{-\lambda_n\tau})\Big]\ d\tau. 
\end{align*}
Hence, it holds that 
\begin{align*}
  -\int_0^t \frac{\partial u}{\n}(z, \tau)\ d\tau&=S_1+S_2+S_3+S_4\\
  &=\int_0^t \sum_{l=1}^{m_0} \chi_{{}_{t-\tau\in [t_{l-1},t_l)}} 
 \Big[\sum_{n=1}^\infty c_{z,n}p_{l,n} (1-e^{-\lambda_n\tau})\Big]\ d\tau,
\end{align*}
which confirms the claim. 

When $K$ is finite, we can pick $t>0$ such that $t>t_K$. 
In this case, the term $S_4$ is eliminated, and $m_0-1$ is replaced by $K$ in the  
terms $S_1,S_2,S_3$. So the claim \eqref{claim} is still valid. 

Above all, we have proved the claim \eqref{claim} and the proof is complete. 
\end{proof}

\subsection{The analysis of Laplace transform.}\label{subsection_laplace}
The proof of Theorem \ref{uniqueness} will rely on the Laplace transform $\L$, defined as 
\begin{equation*}
 \L\{\psi(t)\}(s) =\int_0^\infty e^{-st} \psi(t)\ dt,
 \ s\in\mathbb C.
\end{equation*} 
Recalling the result in Corollary \ref{data_formula}, it is not hard to see that 
\begin{equation*}
 \L \{1-e^{-\lambda_nt}\}(s)=\lambda_n s^{-1}(s+\lambda_n)^{-1},\ \re s>0.
\end{equation*}
Also, Assumption \ref{condition_f_regularity} gives that 
\begin{align*}
 \Big|\int_0^t \sum_{k=1}^K \chi_{{}_{t-\tau\in [t_{k-1},t_k)}}\Big[\sum_{n=1}^\infty c_{z,n}p_{k,n} (1-e^{-\lambda_n\tau})\Big]\ d\tau\Big|\le 2t\sum_{k=1}^K \sum_{n=1}^\infty |c_{z,n}p_{k,n}|\le Ct,
\end{align*}
and we have $|e^{-st}t|$ is integrable on $(0,\infty)$ for $\re s>0$. These mean the Dominated Convergence Theorem can be used and it gives that for $\re s>0,$ 
\begin{equation*}
 \begin{aligned}
  \L\Big\{-\int_0^t \frac{\partial u}{\n}(z, \tau)\ d\tau\Big\}(s)
 &=\sum_{k=1}^K \int_0^\infty e^{-st}\int_0^t  \chi_{{}_{t-\tau\in [t_{k-1},t_k)}} 
 \Big[\sum_{n=1}^\infty c_{z,n}p_{k,n}(1-e^{-\lambda_n\tau})\Big]\ d\tau\ dt\\
 &=\sum_{k=1}^K \Big[\int_0^\infty e^{-st}\chi_{{}_{t\in [t_{k-1},t_k)}}\ dt\Big] 
 \ \Big[\int_0^\infty e^{-st}\sum_{n=1}^\infty c_{z,n}p_{k,n} 
 (1-e^{-\lambda_nt})\ dt\Big]\\
 &=\sum_{k=1}^K s^{-2}(e^{-t_{k-1}s}-e^{-t_ks})
 \ \Big[\sum_{n=1}^\infty c_{z,n}p_{k,n}\lambda_n(s+\lambda_n)^{-1}\Big].
 \end{aligned}
\end{equation*}
Hence, we see that for $\re s>0$, 
\begin{equation}\label{laplace}
s^2\L\Big\{-\int_0^t \frac{\partial u}{\n}(z, \tau)\ d\tau\Big\}(s)
 =\sum_{k=1}^K (e^{-t_{k-1}s}-e^{-t_ks})\ 
 \Big[\sum_{n=1}^\infty c_{z,n}p_{k,n}\lambda_n(s+\lambda_n)^{-1}\Big].
\end{equation}

We index the set of distinct eigenvalues as $\{\lambda_j\}_{j=1}^\infty$ with increasing order. Then we give the next lemma, which contains the well-definedness and the analyticity of the above complex-valued series.  
\begin{lemma}\label{analytic}
 Under Assumption \ref{condition_f_regularity}, we have the following properties:  
 \begin{itemize}   
 \item [(a)] For $R>0$ we define $\Lambda_R:=\{s\in\mathbb C:|s|<R\}$, then for each $k\in\{1,\cdots,K\}$,\\ $\sum_{n=1}^\infty c_{z,n}p_{k,n} \lambda_n(s+\lambda_n)^{-1}$ is uniformly convergent for $s\in \Lambda_R\setminus \{-\lambda_j\}_{j=1}^\infty$; 
  \item [(b)] $\sum_{n=1}^\infty c_{z,n}p_{k,n} \lambda_n(s+\lambda_n)^{-1}$ is holomorphic on $\mathbb C\setminus\{-\lambda_j\}_{j=1}^\infty$; 
 \item [(c)] The series $\sum_{k=1}^K (e^{-t_{k-1}s}-e^{-t_ks})\ \big[\sum_{n=1}^\infty c_{z,n}p_{k,n}\lambda_n(s+\lambda_n)^{-1}\big]$ is analytic on\\ $\mathbb C^+:=\{s\in\mathbb C:\re s>0\}$.
 \end{itemize}
\end{lemma}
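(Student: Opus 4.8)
The plan is to take the three parts in order; part~(a) carries essentially all of the analysis, and parts~(b)--(c) then follow from the standard fact that a locally uniform limit of holomorphic functions is holomorphic. \emph{For part (a):} fix $R>0$ and $k\in\{1,\dots,K\}$. Since $\lambda_n\to\infty$, only finitely many $n$ satisfy $\lambda_n\le 2R$, and I split the series over $n$ at this threshold. For the tail, i.e. for $n$ with $\lambda_n>2R$ and any $s\in\Lambda_R$, the reverse triangle inequality gives $|s+\lambda_n|\ge\lambda_n-|s|>\lambda_n-R\ge\lambda_n/2$, hence $|\lambda_n(s+\lambda_n)^{-1}|<2$ and $|c_{z,n}p_{k,n}\lambda_n(s+\lambda_n)^{-1}|\le 2|c_{z,n}p_{k,n}|$. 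By Assumption~\ref{condition_f_regularity}(b), $\sum_n|c_{z,n}p_{k,n}|<\infty$, so the Weierstrass $M$-test shows the tail converges uniformly on all of $\Lambda_R$. The finitely many remaining terms are rational functions whose only poles lie in $\{-\lambda_j\}_{j=1}^\infty$, so the full series converges on $\Lambda_R\setminus\{-\lambda_j\}_{j=1}^\infty$, uniformly on compact subsets avoiding the poles.

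\emph{For part (b):} given $s_0\in\mathbb C\setminus\{-\lambda_j\}_{j=1}^\infty$, choose a closed disk $\overline D\ni s_0$ disjoint from $\{-\lambda_j\}$ and an $R$ with $D\subset\Lambda_R$. By the estimate from part~(a) the tail ($\lambda_n>2R$) is a uniformly convergent series of functions holomorphic on $D$ (each term's pole $-\lambda_n$ lies outside $\Lambda_R$, hence outside $D$), so its sum is holomorphic on $D$ by the Weierstrass convergence theorem; adding the finitely many remaining terms, each holomorphic on $D$ since its pole is excluded, gives holomorphy on $D$. As $s_0$ is arbitrary, the series is holomorphic on $\mathbb C\setminus\{-\lambda_j\}_{j=1}^\infty$.

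\emph{For part (c):} each factor $e^{-t_{k-1}s}-e^{-t_k s}$ is entire, and by part~(b) the bracket $B_k(s):=\sum_n c_{z,n}p_{k,n}\lambda_n(s+\lambda_n)^{-1}$ is holomorphic on $\mathbb C^+$, since every $-\lambda_j$ is negative real and hence not in $\mathbb C^+$. If $K<\infty$ the claim is immediate, so suppose $K=\infty$. For $\re s>0$ one has $\re(s+\lambda_n)>\lambda_n$, so $|\lambda_n(s+\lambda_n)^{-1}|<1$ and $|B_k(s)|\le\sum_n|c_{z,n}p_{k,n}|$; combined with $|e^{-t_{k-1}s}-e^{-t_k s}|\le 2$ this yields the $k$-independent majorant $M_k:=2\sum_n|c_{z,n}p_{k,n}|$ with $\sum_k M_k<\infty$ by Assumption~\ref{condition_f_regularity}(b). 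The $M$-test then gives uniform convergence of the $k$-series on $\mathbb C^+$, and a uniform limit of functions holomorphic on $\mathbb C^+$ is holomorphic there.

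The only genuinely non-routine point is the uniform bound in part~(a): one must notice that $|\lambda_n/(s+\lambda_n)|\le 2$ holds for all but finitely many $n$, so that the absolute summability furnished by Assumption~\ref{condition_f_regularity}(b) can be fed into the $M$-test; once this is in place, parts~(b) and~(c) are bookkeeping with the Weierstrass convergence theorem. A minor point worth handling carefully is the precise meaning of ``uniformly convergent on $\Lambda_R\setminus\{-\lambda_j\}$'': what is actually true, and what (b)--(c) use, is uniform convergence of the tail on $\Lambda_R$ together with uniform convergence on compact subsets that avoid the poles.
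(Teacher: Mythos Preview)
Your proof is correct and follows essentially the same approach as the paper: bound $|\lambda_n(s+\lambda_n)^{-1}|$ for large $n$ using $\lambda_n\to\infty$, feed the absolute summability from Assumption~\ref{condition_f_regularity}(b) into the Weierstrass $M$-test, and then invoke the Weierstrass convergence theorem for holomorphy. Your closing caveat is over-cautious: since the head has only finitely many terms, the series is in fact uniformly convergent on all of $\Lambda_R\setminus\{-\lambda_j\}$ (not merely on compact subsets), exactly as the lemma states.
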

\begin{proof}
For $(a)$, fixing $R>0$, since $\lim_{n\to\infty} \lambda_n=\infty$, then we can find large enough $N_1\in\mathbb N^+$ such that $\lambda_n\ge 2R$ if $n>N_1$. For $s\in \Lambda_R\setminus \{-\lambda_j\}_{j=1}^\infty$ and $n>N_1$, 
$$|s+\lambda_n|\ge |\re s+\lambda_n|=\re s+\lambda_n\ge \lambda_n-R.$$
This means that $|\lambda_n(s+\lambda_n)^{-1}|\le 2$. Given $\epsilon>0$, from Assumption \ref{condition_f_regularity}, we can find $N_2>0$ such that $\sum_{n=n_0}^\infty |c_{z,n}p_{k,n}|<\epsilon/2$ if $n_0>N_2$. Hence, for $n_0>\max\{N_1,N_2\}$, we have
\begin{equation*}
 \Big|\sum_{n=n_0}^\infty c_{z,n}p_{k,n} \lambda_n(s+\lambda_n)^{-1}\Big|\le 2\sum_{n=n_0}^\infty |c_{z,n}p_{k,n}|<\epsilon\ \text{for}\ s\in \Lambda_R\setminus\{-\lambda_j\}_{j=1}^\infty.
\end{equation*}
This gives the uniform convergence. \\

For $(b)$, fixing $R>0$, it is clear that $c_{z,n}p_{k,n}\lambda_n(s+\lambda_n)^{-1}$ is holomorphic on $\Lambda_R\setminus\{-\lambda_j\}_{j=1}^\infty$. Then the uniform convergence gives that 
$\sum_{n=1}^\infty c_{z,n}p_{k,n}\lambda_n(s+\lambda_n)^{-1}$ is 
holomorphic on $\Lambda_R\setminus\{-\lambda_j\}_{j=1}^\infty$. For each $s_0\in\mathbb C\setminus \{-\lambda_j\}_{j=1}^\infty$, we can find a sufficiently large $R>0$ such that $s_0\in \Lambda_R\setminus\{-\lambda_j\}_{j=1}^\infty$. This means that $\sum_{n=1}^\infty c_{z,n}p_{k,n}\lambda_n(s+\lambda_n)^{-1}$ is holomorphic on $s=s_0$, which leads to the desired result.\\

For $(c)$, for $s\in\mathbb C^+$, we have $|e^{-t_{k-1}s}-e^{-t_ks}|\le2$ and 
$|\lambda_n(s+\lambda_n)^{-1}|\le 1$. Also Assumption \ref{condition_f_regularity} gives that $\sum_{k=1}^K \sum_{n=1}^\infty |c_{z,n}p_{k,n}|<\infty$, then following the proof for $(a)$, we obtain that the series 
$$\sum_{k=1}^K (e^{-t_{k-1}s}-e^{-t_ks})\ 
 \Big[\sum_{n=1}^\infty c_{z,n}p_{k,n}\lambda_n(s+\lambda_n)^{-1}\Big]$$ 
 is uniformly convergent on $\mathbb C^+$. Sequentially, from the proof for $(b)$, the holomorphic result can be derived and the proof is complete.  
\end{proof}

\subsection{Some auxiliary lemmas.}
Before to show Theorem \ref{uniqueness}, we list and prove several auxiliary lemmas. 
\begin{lemma}\label{lemma_uniqueness_1}
Recall that $\{\lambda_j\}_{j=1}^\infty$ is the set of distinct eigenvalues with increasing order. 
For any nonempty open subset $\Gamma\subset\partial\Omega$, if $$\sum_{\lambda_n=\lambda_j} c_{z,n}\eta_n=0\ \text{for}\  j\in\mathbb{N}^+\ \text{and\ a.e.}\  z\in\Gamma,$$ 
then $\{\eta_n\}_{n=1}^\infty=\{0\}$. 
\end{lemma}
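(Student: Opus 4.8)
The plan is to reduce this statement to the unique continuation argument already carried out in the proof of Lemma~\ref{nonempty_open}, after rewriting $c_{z,n}$ as a scalar multiple of the pointwise boundary normal derivative $\frac{\partial\varphi_n}{\n}(z)$. Fix $j\in\mathbb N^+$ and put $J:=\{n\in\mathbb N^+:\lambda_n=\lambda_j\}$, a finite set since the eigenspaces of $\A$ are finite dimensional. For $n\in J$ recall that $c_{z,n}^N=-\lambda_n^{-1}\sum_{l=1}^N\tilde\xi_l(z)\l\frac{\partial\varphi_n}{\n},\tilde\xi_l\rp$, and that, by the construction of $\{\tilde\xi_l\}$, one has $\l\frac{\partial\varphi_n}{\n},\tilde\xi_l\rp=0$ for all $l$ larger than some $L_n$. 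Setting $L:=\max_{n\in J}L_n$, for every $N\ge L$ the finite sum $\sum_{l=1}^N\l\frac{\partial\varphi_n}{\n},\tilde\xi_l\rp\tilde\xi_l$ is already the full Fourier expansion of $\frac{\partial\varphi_n}{\n}|_{\partial\Omega}$ in the orthonormal basis $\{\tilde\xi_l\}$ of $L^2(\kappa,\partial\Omega)$, hence coincides with $\frac{\partial\varphi_n}{\n}$ as an element of $L^2(\kappa,\partial\Omega)$. Letting $N\to\infty$ in the definition of $c_{z,n}$ therefore gives
\begin{equation*}
 c_{z,n}=-\lambda_j^{-1}\,\frac{\partial\varphi_n}{\n}(z)\qquad\text{for a.e.\ }z\in\partial\Omega\ \text{and every }n\in J .
\end{equation*}

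Next I would feed this into the hypothesis. Because $J$ is finite, the identity above holds simultaneously for all $n\in J$ for a.e.\ $z\in\partial\Omega$; combining this with the hypothesis $\sum_{\lambda_n=\lambda_j}c_{z,n}\eta_n=0$ (valid for a.e.\ $z\in\Gamma$) and setting $\varphi:=\sum_{n\in J}\eta_n\varphi_n$, I get
\begin{equation*}
 \frac{\partial\varphi}{\n}(z)=\sum_{n\in J}\eta_n\,\frac{\partial\varphi_n}{\n}(z)=-\lambda_j\!\!\sum_{\lambda_n=\lambda_j}c_{z,n}\eta_n=0\qquad\text{for a.e.\ }z\in\Gamma .
\end{equation*}
Here $\varphi$, being a finite linear combination of eigenfunctions all having eigenvalue $\lambda_j$, is itself an eigenfunction of $\A$ for $\lambda_j$ with vanishing Dirichlet trace, and its normal derivative vanishes a.e.\ on the nonempty open set $\Gamma$.

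Finally I would repeat the argument in the proof of Lemma~\ref{nonempty_open} with $\varphi$ (and, if $\varphi$ is complex valued, with $\re\varphi$ and $\im\varphi$ separately) in place of $\varphi_k$: since $\frac{\partial\varphi}{\n}=0$ a.e.\ on $\Gamma$, it vanishes a.e.\ on $B(x_0,r)\cap\partial\Omega$ for some boundary ball, the zero extension $\varphi_e$ of $\varphi$ across that ball is an $H^1$ weak solution of $(\A-\lambda_j)v=0$ on the enlarged domain that vanishes on a nonempty open subset, and the unique continuation principle forces $\varphi\equiv0$ on $\Omega$. By orthonormality of $\{\varphi_n\}$ this means $\eta_n=0$ for every $n\in J$, and since $j\in\mathbb N^+$ was arbitrary we conclude $\{\eta_n\}_{n=1}^\infty=\{0\}$. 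I expect the only genuinely delicate point to be the first step: one has to check that the partial sums defining $c_{z,n}$ stabilize in $N$ and represent $\frac{\partial\varphi_n}{\n}$ \emph{pointwise a.e.}\ on $\partial\Omega$, not merely in $L^2(\kappa,\partial\Omega)$; once $c_{z,n}$ is identified with $-\lambda_j^{-1}\frac{\partial\varphi_n}{\n}(z)$, the lemma is essentially Lemma~\ref{nonempty_open} applied to a general eigenfunction instead of a single basis eigenfunction.
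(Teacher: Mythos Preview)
Your proposal is correct and follows essentially the same route as the paper: both identify $c_{z,n}=-\lambda_j^{-1}\frac{\partial\varphi_n}{\n}(z)$ by exploiting that the Gram--Schmidt construction makes $\frac{\partial\varphi_n}{\n}$ a \emph{finite} linear combination of the $\tilde\xi_l$ (so the partial sums stabilize and the equality is exact as functions, answering your ``pointwise a.e.'' concern), then reduce to the vanishing of the normal derivative of the eigenfunction $\varphi=\sum_{n\in J}\eta_n\varphi_n$ on $\Gamma$ and invoke the unique continuation argument of Lemma~\ref{nonempty_open}. The only cosmetic difference is that the paper cites Lemma~\ref{nonempty_open} directly (its proof works verbatim for any eigenfunction, not just a basis one), whereas you re-run that argument explicitly.
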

\begin{proof}
Fixing $j\in \mathbb N^+$, from the proof of Lemma \ref{c_z,n}, we have  
 \begin{equation*}
 \begin{aligned}
  c_{z,n}&=\lambda_j^{-1}\lim_{N\to \infty}  \l \psi_z^N,\A\overline{\varphi_n}\ro\\
  &=\lambda_j^{-1}\lim_{N\to \infty}\Big(\l \A\psi_z^N,\overline{\varphi_n}\ro-\l\psi_z^N,\frac{\partial\overline{\varphi_n}}{\n}\rp+\l\frac{\partial\psi_z^N}{\n},\overline{\varphi_n}\rp\Big)\\
  &=-\lambda_j^{-1}\sum_{l=1}^\infty \tilde\xi_l(z)\l\frac{\partial\varphi_n}{\n},\tilde\xi_l\rp.
  \end{aligned}
 \end{equation*}
With the definition of the orthonormal basis $\{\tilde \xi_l\}_{l=1}^\infty$, the above series is actually finite. This gives that for a.e. $z\in\Gamma$, 
\begin{equation*}
 \sum_{l=1}^\infty \tilde\xi_l(z)\l \frac{\partial\varphi_n}{\n},\tilde\xi_l\rp=\frac{\partial\varphi_n}{\n}(z),
\end{equation*}
which leads to $\sum_{\lambda_n=\lambda_j}\eta_n\frac{\partial\varphi_n}{\n}(z)=0$ for a.e. $z\in\Gamma$. Assume that $\{\eta_n:\lambda_n=\lambda_j\}\ne\{0\}$, then the linear independence of the set $\{\varphi_n(x):\lambda_n=\lambda_j\}$ yields that $\sum_{\lambda_n=\lambda_j}\eta_n\varphi_n(x)$ is not vanishing on $\Omega$. However, we see that $\sum_{\lambda_n=\lambda_j}\eta_n\varphi_n(x)$ is an eigenfunction corresponding to the eigenvalue $\lambda_j$. These and Lemma \ref{nonempty_open} give that $\sum_{\lambda_n=\lambda_j}\eta_n\frac{\partial\varphi_n}{\n}$ can not vanish almost everywhere on $\Gamma$, which is a contradiction. Hence, we prove that $\{\eta_n:\lambda_n=\lambda_j\}=\{0\}$ for each $j\in\mathbb N^+$, namely, $\eta_n=0$ for $n\in\mathbb N^+$. The proof is complete. 
\end{proof}

\begin{lemma}\label{lemma_uniqueness_2}
Set $\Gamma\subset\partial\Omega$ be a nonempty open subset and let $\sum_{n=1}^\infty c_{z,n} \eta_n$ be absolute convergent for a.e. $z\in \Gamma$. Then given $\epsilon>0$, the result     
$$\sum_{n=1}^\infty c_{z,n} \eta_n(1-e^{-\lambda_nt})=0\ \text{for}\ t\in(0,\epsilon)\ \text{and\ a.e.}\ z\in \Gamma$$ 
implies that $\eta_n=0$ for $n\in\mathbb N^+$. 
\end{lemma}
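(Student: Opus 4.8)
The plan is to fix a point $z$ in the full-measure subset of $\Gamma$ on which both the absolute convergence of $\sum_{n=1}^\infty c_{z,n}\eta_n$ and the vanishing hypothesis hold, and to reduce the conclusion to Lemma \ref{lemma_uniqueness_1}. Since $|1-e^{-\lambda_n t}|\le 1$, the series $g(t):=\sum_{n=1}^\infty c_{z,n}\eta_n(1-e^{-\lambda_n t})$ is absolutely convergent, so I may regroup it according to the distinct eigenvalues, which I relabel $\mu_1<\mu_2<\cdots$: writing $b_{z,j}:=\sum_{\lambda_n=\mu_j}c_{z,n}\eta_n$ (a finite sum, by finiteness of the multiplicities) one has $\sum_{j}|b_{z,j}|\le\sum_n|c_{z,n}\eta_n|<\infty$ and $g(t)=\sum_{j=1}^\infty b_{z,j}(1-e^{-\mu_j t})$, which vanishes on $(0,\epsilon)$ by hypothesis.

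The first real step is an analytic continuation. For every $\delta>0$ and every $s$ with $\re s>\delta$ one has $|b_{z,j}e^{-\mu_j s}|\le |b_{z,j}|e^{-\mu_j\delta}\le|b_{z,j}|$, so by the Weierstrass $M$-test $\sum_j b_{z,j}e^{-\mu_j s}$ converges uniformly on $\{\re s>\delta\}$ and defines a holomorphic function there; letting $\delta\downarrow 0$ shows that $g$ extends holomorphically to the half-plane $\{\re s>0\}$. Since $g$ vanishes on the segment $(0,\epsilon)$, which has an accumulation point in $\{\re s>0\}$, the identity theorem forces $g\equiv 0$ on $\{\re s>0\}$; in particular $g(t)=0$ for every $t>0$.

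Next I extract a Dirichlet series identity and peel it off term by term. Letting $t\to+\infty$ in $0=g(t)=\sum_j b_{z,j}-\sum_j b_{z,j}e^{-\mu_j t}$ and invoking dominated convergence ($|b_{z,j}e^{-\mu_j t}|\le|b_{z,j}|$, summable, and $\to 0$ pointwise) gives $\sum_j b_{z,j}=0$, hence $\sum_j b_{z,j}e^{-\mu_j t}=0$ for all $t>0$. Multiplying this identity by $e^{\mu_1 t}$ and letting $t\to+\infty$, the remaining tail is bounded by $e^{-(\mu_2-\mu_1)t}\sum_{j\ge2}|b_{z,j}|\to0$, so $b_{z,1}=0$; iterating (multiply by $e^{\mu_m t}$ after $b_{z,1}=\cdots=b_{z,m-1}=0$) yields $b_{z,j}=0$ for every $j$. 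As this holds for a.e. $z\in\Gamma$, we obtain $\sum_{\lambda_n=\mu_j}c_{z,n}\eta_n=0$ for every $j\in\mathbb{N}^+$ and a.e. $z\in\Gamma$, which is exactly the hypothesis of Lemma \ref{lemma_uniqueness_1}; therefore $\eta_n=0$ for all $n\in\mathbb{N}^+$.

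I expect the analytic-continuation step to be the crux: the hypothesis only provides vanishing on the small interval $(0,\epsilon)$, and one must upgrade this to vanishing on all of $(0,\infty)$ before the elementary peeling-off argument for Dirichlet series applies. The verification that $g$ is holomorphic on $\{\re s>0\}$ — which rests only on $\sum_j|b_{z,j}|<\infty$ together with $\mu_j>0$ and $\mu_j\to\infty$ — is what makes this upgrade legitimate; everything after it is routine.
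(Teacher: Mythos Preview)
Your proof is correct and follows the same overall route as the paper: both reduce to Lemma~\ref{lemma_uniqueness_1} by showing that the eigenvalue-grouped coefficients $\sum_{\lambda_n=\mu_j}c_{z,n}\eta_n$ all vanish for a.e.\ $z\in\Gamma$. The only difference is that the paper outsources this intermediate step to \cite[Lemma~3.5]{RundellZhang:2020}, whereas you supply a self-contained argument---analytic continuation of $g$ to $\{\re s>0\}$ via the Weierstrass $M$-test, the identity theorem to upgrade from $(0,\epsilon)$ to $(0,\infty)$, and then the standard peeling-off for the Dirichlet series $\sum_j b_{z,j}e^{-\mu_j t}=0$. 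Your version has the advantage of being fully internal to the paper and making explicit exactly which hypotheses are used (only $\sum_j|b_{z,j}|<\infty$, $\mu_j>0$ distinct, $\mu_j\to\infty$); the paper's version is shorter on the page but relies on the reader tracking down the cited lemma.
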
 
\begin{proof}
From \cite[Lemma 3.5]{RundellZhang:2020} we have that $\sum_{\lambda_n=\lambda_j} c_{z,n} \eta_n=0$ for each $j\in\mathbb N^+$ and a.e. $z\in\Gamma$. Then Lemma \ref{lemma_uniqueness_1} gives the desired result and completes the proof. 
\end{proof}

\begin{lemma}\label{lemma_uniqueness_3}
 Let the conditions in Lemma \ref{lemma_uniqueness_2} be valid. 
 For $s\in \mathbb C^+$, if $\exists\epsilon>0$ such that 
 \begin{equation*}
  \lim_{\re s\to \infty}e^{\epsilon s} \Big[\sum_{n=1}^\infty c_{z,n}
  \lambda_n\eta_n(s+\lambda_n)^{-1}\Big]=0\ \text{for a.e.}\ z\in \Gamma, 
 \end{equation*} 
 then $\{\eta_n\}_{n=1}^\infty=\{0\}$.
\end{lemma}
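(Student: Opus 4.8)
The plan is to reduce the hypothesis on the Laplace-type resolvent series to the hypothesis of Lemma \ref{lemma_uniqueness_2}, so that the conclusion $\{\eta_n\}=\{0\}$ follows immediately. The natural bridge is the Laplace transform identity $\L\{1-e^{-\lambda_n t}\}(s) = \lambda_n s^{-1}(s+\lambda_n)^{-1}$ noted at the start of Section \ref{subsection_laplace}. Indeed, under the absolute convergence assumption carried over from Lemma \ref{lemma_uniqueness_2} (and using $|\lambda_n(s+\lambda_n)^{-1}|\le 1$ on $\mathbb C^+$), the series $g_z(t):=\sum_{n=1}^\infty c_{z,n}\eta_n(1-e^{-\lambda_n t})$ is well defined, bounded uniformly in $t$, hence of at most exponential (indeed linear) growth, and term-by-term Laplace transformation is justified by dominated convergence exactly as in the derivation of \eqref{laplace}. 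Therefore, for $\re s>0$,
\begin{equation*}
 s\,\L\{g_z(t)\}(s) = \sum_{n=1}^\infty c_{z,n}\eta_n\,\lambda_n(s+\lambda_n)^{-1}=:G_z(s).
\end{equation*}

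First I would record that $G_z$ is holomorphic on $\mathbb C^+$ (this follows from the uniform-convergence argument in Lemma \ref{analytic}(a)–(b), now applied with $\eta_n$ in place of $p_{k,n}$), and that the hypothesis says precisely $e^{\epsilon s}G_z(s)\to 0$ as $\re s\to\infty$. The goal is to conclude that $g_z$ vanishes on an interval $(0,\epsilon)$. The classical tool is a Tauberian/uniqueness statement for the Laplace transform: if a bounded function $g$ on $(0,\infty)$ has $\L\{g\}(s) = s^{-1}G_z(s)$ with $e^{\epsilon s}G_z(s)\to 0$ along the real axis, then $g\equiv 0$ on $(0,\epsilon)$. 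Concretely I would invert the Laplace transform via the Bromwich integral, or — more elementarily and more robustly — argue by the injectivity of the Laplace transform combined with a shift: the decay rate $e^{\epsilon s}$ forces $\L\{g_z\}(s)=e^{-\epsilon s}h_z(s)$ for some $h_z$ that is still a legitimate Laplace transform (of a bounded function), which by the translation property of $\L$ means $g_z(t)=0$ for $t<\epsilon$. One clean way to make this rigorous: for $t<\epsilon$ the Bromwich contour can be pushed to $\re s\to+\infty$, and the bound $|G_z(s)|\le o(e^{-\epsilon s})$ together with $|e^{st}|=e^{t\,\re s}$ and the decay of $1/s$ kills the integral, giving $g_z(t)=0$.

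Once $g_z(t)=\sum_{n=1}^\infty c_{z,n}\eta_n(1-e^{-\lambda_n t})=0$ for $t\in(0,\epsilon)$ and a.e. $z\in\Gamma$, Lemma \ref{lemma_uniqueness_2} applies verbatim and yields $\eta_n=0$ for all $n\in\mathbb N^+$, completing the proof. The main obstacle I anticipate is the Laplace-inversion step: one must justify moving the contour and estimating the Bromwich integral carefully, since $G_z$ is only known to be holomorphic on $\mathbb C^+$ (with poles at $-\lambda_j$ lurking just outside), and the decay $e^{\epsilon s}G_z(s)\to 0$ is assumed only along the real direction, not uniformly in $\arg s$. Handling this cleanly may require either an a priori bound on $|G_z(s)|$ for $\re s$ large (available from the uniform convergence, which gives $|G_z(s)|\to 0$ as $\re s\to\infty$ uniformly, hence combining with the $e^{\epsilon s}$-decay hypothesis one can interpolate), or invoking a Phragmén–Lindelöf argument in the right half-plane to upgrade the real-axis decay to sectorial decay. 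Everything else — the Laplace transform computation, the exponential bound on $g_z$, and the final appeal to Lemma \ref{lemma_uniqueness_2} — is routine.
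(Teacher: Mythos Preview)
Your overall strategy is exactly the paper's: show that $g_z(t):=\sum_{n}c_{z,n}\eta_n(1-e^{-\lambda_n t})$ vanishes on $(0,\epsilon)$ and then invoke Lemma~\ref{lemma_uniqueness_2}. The identification $s\,\L\{g_z\}(s)=G_z(s)$ and the use of absolute convergence to justify termwise manipulations are also the same.

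Where you fall short is the inversion step. The Bromwich route you sketch is genuinely problematic: $\L\{g_z\}(s)=s^{-1}G_z(s)$ decays only like $1/|s|$ on vertical lines, so the inversion integral is not absolutely convergent and pushing the contour to $\re s\to\infty$ requires control of both the vertical integrals and the horizontal connecting segments that you do not have. Your alternative (``the decay forces $\L\{g_z\}=e^{-\epsilon s}h_z$ with $h_z$ a Laplace transform'') is precisely the conclusion to be proved; asserting that $h_z$ is itself a Laplace transform of a bounded function is the whole difficulty. Phragm\'en--Lindel\"of is in the right spirit but is not what is needed.

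The paper's device is simpler and avoids all of this: it splits off an \emph{entire} piece and applies Liouville. Concretely, write
\[
e^{\epsilon s}\L\{g_z\}(s)=\underbrace{\int_0^\epsilon e^{(\epsilon-t)s}g_z(t)\,dt}_{=:S_1(s)}\;+\;\underbrace{\int_0^\infty e^{-ts}g_z(t+\epsilon)\,dt}_{=:S_2(s)}.
\]
Here $S_1$ is an integral over a bounded interval, hence entire, and for $\re s\le R_0$ one has $|S_1(s)|\le \|g_z\|_\infty\int_0^\epsilon e^{(\epsilon-t)R_0}\,dt<\infty$. On the other side, $|S_2(s)|\le \|g_z\|_\infty/\re s\to 0$ as $\re s\to\infty$, and $e^{\epsilon s}\L\{g_z\}(s)=s^{-1}\bigl[e^{\epsilon s}G_z(s)\bigr]\to 0$ by hypothesis; hence $S_1(s)\to 0$ as $\re s\to\infty$ as well. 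Combining the two regimes, $S_1$ is a bounded entire function, so Liouville gives $S_1\equiv 0$. Now $S_1(s)=e^{\epsilon s}\L\{g_z\chi_{[0,\epsilon]}\}(s)$, and injectivity of the Laplace transform yields $g_z\equiv 0$ on $(0,\epsilon)$. (The paper performs one extra time-integration first, working with $\int_0^t g_z$, which introduces an $s^{-2}$ and makes all estimates a bit smoother, but the mechanism is the same.) This is the missing idea: isolate the finite-interval part as an entire function and kill it with Liouville, rather than attempt a contour-shifting inversion.
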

\begin{proof}
For a.e. $z\in \Gamma,$ we define  
\begin{equation*}
F_z(t):=\chi_{{}_{t\ge 0}}\sum_{n=1}^\infty c_{z,n}\eta_n(1-e^{-\lambda_nt}),\ t\in(-\infty,\infty).
\end{equation*}

From the absolute convergence of the series $\sum_{n=1}^\infty c_{z,n}\eta_n$, we see that   
$$|F_z(t)|\le C\sum_{n=1}^\infty |c_{z,n}\eta_n|\le C<\infty.$$ 
Also, the direct computation yields that 
\begin{equation*}
\begin{aligned}
 \int_{-\epsilon}^\infty |e^{-st}|\int_0^{t+\epsilon}\ |F_z(\tau)|\ d\tau\ dt 
 &\le C \int_{-\epsilon}^\infty e^{-t\re s}(t+\epsilon)\ dt\\
&=C e^{\epsilon \re s}\int_0^\infty e^{-t\re s}t\ dt\\
&=C e^{\epsilon \re s} (\re s)^{-2} <\infty.
\end{aligned}
\end{equation*}
This implies the well-definedness of the integration $\int_{-\epsilon}^\infty |e^{-st}|\int_0^{t+\epsilon}\ |F_z(\tau)|\ d\tau\ dt$ for $\re s>0$ and a.e. $z\in \Gamma$. 

Introducing the Heaviside function $H(t):=\chi_{{}_{t\ge 0}}$, we see that 
$$\int_{-\epsilon}^\infty e^{-st}\int_0^{t+\epsilon}\  
F_z(\tau)\ d\tau\ dt
=\int_{-\infty}^\infty e^{-st} \int_{-\infty}^\infty H(t-\tau+\epsilon) 
F_z(\tau)\ d\tau \ dt.$$ 
The above argument provides the well-definedness of the integral 
$$\int_{-\infty}^\infty e^{-st} \int_{-\infty}^\infty H(t-\tau+\epsilon) 
F_z(\tau)\ d\tau \ dt$$ when $\re s>0$. With Fubini Theorem and the Dominated Convergence Theorem, we have  
\begin{align*}
\int_{-\infty}^\infty e^{-ts} \int_{-\infty}^\infty H(t-\tau+\epsilon) 
F_z(\tau)\ d\tau \ dt&=\int_{-\epsilon}^\infty\!e^{-ts}\,dt\ \int_0^\infty e^{-\tau s} \Big[\sum_{n=1}^\infty c_{z,n}\eta_n(1-e^{-\lambda_n\tau})\Big]\,d\tau\\
&=s^{-2}e^{\epsilon s} \Big[\sum_{n=1}^\infty c_{z,n}\lambda_n\eta_n 
(s+\lambda_n)^{-1}\Big],\quad \re s>0,
\end{align*}
which leads to  
\begin{equation*}
 \lim_{\re s\to \infty}\int_{-\infty}^\infty e^{-ts} \int_{-\infty}^\infty H(t-\tau+\epsilon) F_z(\tau)\ d\tau \ dt=0.
\end{equation*}
Also, from the direct calculation, we have  
\begin{align*}
 \int_{-\infty}^\infty e^{-ts}\int_{-\infty}^\infty H(t-\tau+\epsilon) 
F_z(\tau)\ d\tau \ dt&=\int_{-\epsilon}^\infty e^{-ts} \Big[\sum_{n=1}^\infty c_{z,n}\int_0^{t+\epsilon}\eta_n(1-e^{-\lambda_n\tau})\ d\tau\Big] \ dt\\
&=\int_0^\epsilon e^{(\epsilon-t)s}\ \Big[\sum_{n=1}^\infty c_{z,n}\int_0^t\eta_n(1-e^{-\lambda_n\tau})\ d\tau\Big]\ dt\\
&\quad+\int_0^\infty e^{-ts}\ \Big[\sum_{n=1}^\infty c_{z,n}\int_0^{t+\epsilon}\eta_n(1-e^{-\lambda_n\tau})\ d\tau\Big]\ dt\\
&=:S_{z,1}(s)+S_{z,2}(s).
\end{align*}
For $S_{z,2}(s)$, the Dominated Convergence Theorem and the absolute convergence of $\sum_{n=1}^\infty c_{z,n}\eta_n$ give that   
\begin{equation*}
  |S_{z,2}(s)|\le C\sum_{n=1}^\infty |c_{z,n}\eta_n|\int_0^\infty e^{-t \re s} |t+\epsilon|\ dt,
\end{equation*} 
and clearly the right side converges to $0$ as $\re s\to \infty$. So $\lim_{\re s\to\infty}S_{z,2}(s)=0$.
With the limit assumption in this lemma, we have 
\begin{equation*}
 \lim_{\re s\to \infty} S_{z,1}(s) 
 =\lim_{\re s\to \infty} [S_{z,1}(s)+S_{z,2}(s)]-\lim_{\re s\to \infty} S_{z,2}(s)=0.
\end{equation*}
For each $R_0\in \mathbb R$, when $\re s\in (-\infty, R_0)$, we can see  
\begin{equation*}
 |S_{z,1}(s)|\le C\int_0^\epsilon e^{(\epsilon-t)\re s}t\ \sum_{n=1}^\infty 
 |c_{z,n}\eta_n|\ dt\le Ce^{\epsilon R_0}\int_0^\epsilon t\ dt=C_{\epsilon,R_0}<\infty. 
\end{equation*}
This with the limit result $\lim_{\re s\to \infty} S_{z,1}(s)=0$ yields  that $S_{z,1}(s)$ is well-defined and bounded on the whole complex plane $\mathbb{C}$. The definition of $S_{z,1}(s)$ gives that $S_{z,1}(s)$ is holomorphic on $\mathbb{C}$. Hence, we have that $S_{z,1}(s)$ is a bounded entire function for a.e. $z\in \Gamma$, which with Liouville's theorem yields that $S_{z,1}\equiv C$ on $\mathbb{C}$. Considering the limit result $\lim_{\re s\to \infty} S_{z,1}(s)=0$, it gives that 
\begin{equation*}
 S_{z,1}(s)=0\ \text{for}\ s\in\mathbb{C}\ \text{and a.e.}\ z\in \Gamma. 
\end{equation*}
Then we can see  
\begin{equation*}
\begin{aligned}
 S_{z,1}(s)=&\int_0^\epsilon e^{(\epsilon-t)s}\ \Big[\sum_{n=1}^\infty c_{z,n}\int_0^t \eta_n(1-e^{-\lambda_n\tau})\ d\tau\Big]\ dt\\
=&e^{\epsilon s}\int_0^\epsilon e^{-ts}\ \Big[\sum_{n=1}^\infty c_{z,n}\int_0^t \eta_n(1-e^{-\lambda_n\tau})\ d\tau\Big]\ dt\equiv 0,
\end{aligned}
\end{equation*}
which means for $\re s>0$, 
\begin{equation*}
\begin{aligned}
 0\equiv&\int_0^\infty e^{-st}H(\epsilon-t)\ \Big[\sum_{n=1}^\infty c_{z,n}\int_0^t \eta_n(1-e^{-\lambda_n\tau})\ d\tau\Big]\,dt \\
=&\L \Big\{H(\epsilon-t)\sum_{n=1}^\infty c_{z,n}\int_0^t \eta_n(1-e^{-\lambda_n\tau})\ d\tau\Big\}(s).
\end{aligned}
\end{equation*}
Using \cite[Corollary 8.1]{Folland:1992}, it follows that  
\begin{equation*}
 \sum_{n=1}^\infty c_{z,n}\int_0^t \eta_n(1-e^{-\lambda_n\tau})\ d\tau = 0,\ t\in(0,\epsilon).
\end{equation*} 
The absolute convergence of $\sum_{n=1}^\infty c_{z,n} \eta_n$ supports the uniform convergence of 
$$\sum_{n=1}^\infty c_{z,n} \int_0^t\eta_n(1-e^{-\lambda_n\tau})\ d\tau\ \text{and}\ \sum_{n=1}^\infty c_{z,n}\eta_n (1-e^{-\lambda_n t})$$ on $(0,\epsilon)$. Thus, from \cite[Theorem 7.17]{Rudin:1976}, differentiating the series   
$\sum_{n=1}^\infty c_{z,n} \int_0^t\eta_n(1-e^{-\lambda_n\tau})\ d\tau$ on $t$ yields that for $t\in(0,\epsilon)$ and a.e. $z\in \Gamma$, 
$$0=\frac{d}{dt}\Big[\sum_{n=1}^\infty c_{z,n} \int_0^t\eta_n(1-e^{-\lambda_n\tau})\ d\tau\Big] =\sum_{n=1}^\infty c_{z,n}\eta_n (1-e^{-\lambda_n t}).$$
This together with Lemma \ref{lemma_uniqueness_2} leads to $\eta_n=0$ for $n\in\mathbb N^+$, and completes the proof. 
\end{proof}

\subsection{Proof of Theorem \ref{uniqueness}.}

Now, it is time to show Theorem \ref{uniqueness}. For shorten the proof, the following notations will be used: 
\begin{equation*}
 \begin{aligned}
 &p_{k,n}=\l p_k(\cdot),\varphi_n(\cdot)\ro, 
 && Q_{z,k}(s)= \sum_{n=1}^\infty c_{z,n}p_{k,n}\lambda_n(s+\lambda_n)^{-1},\\
 &\tilde p_{k,n}=\l \tilde{p}_k(\cdot),\varphi_n(\cdot)\ro,
  &&\tilde Q_{z,k}(s)= \sum_{n=1}^\infty c_{z,n}\tilde p_{k,n}\lambda_n(s+\lambda_n)^{-1}.
 \end{aligned}
\end{equation*}

\begin{proof}[Proof of Theorem \ref{uniqueness}]
  The result \eqref{laplace}, Lemma \ref{analytic} and the analytic continuation give that for $s\in\mathbb{C}^+$ and a.e. $z\in\Gamma$,    
 \begin{equation}\label{equality_2}
    \sum_{k=1}^K (e^{-t_{k-1}s}-e^{-t_ks})Q_{z,k}(s)
 =\sum_{k=1}^{\tilde K} (e^{-\tilde t_{k-1}s}-e^{-\tilde t_ks})\tilde Q_{z,k}(s).
  \end{equation}

Firstly let us show $t_0=\tilde t_0$. Without loss of generality, we can assume that $t_0<\tilde t_0$. Picking $\epsilon>0$ such that $\epsilon<\min \{\tilde t_0- t_0,t_1-t_0\}$, from \eqref{equality_2}, we have for $s\in\mathbb{C}^+$ and a.e. $z\in\Gamma$, 
\begin{equation}\label{equality_3}
\begin{aligned}
  e^{\epsilon s} Q_{z,1}(s) =&e^{(\epsilon+t_0-t_1)s}Q_{z,1}(s)-\sum_{k=2}^K (e^{(\epsilon+t_0-t_{k-1})s}-e^{(\epsilon+t_0-t_k)s})Q_{z,k}(s)\\ 
 &+\sum_{k=1}^{\tilde K} (e^{(\epsilon+t_0- \tilde t_{k-1})s}-e^{(\epsilon+t_0-\tilde t_k)s})\tilde Q_{z,k}(s). 
\end{aligned}
\end{equation}
We see that $\lambda_n(s+\lambda_n)^{-1}$ is uniformly bounded on $\mathbb C^+$. Recalling the convergence result $\sum_{k=1}^K \sum_{n=1}^\infty |c_{z,n}p_{k,n}|<\infty$ from Assumption \ref{condition_f_regularity}, the uniform convergence of the series\\ $\sum_{k=2}^K (e^{(\epsilon+t_0-t_{k-1})s}-e^{(\epsilon+t_0-t_k)s})Q_{z,k}(s)$ on the half plane $\mathbb C^+$ can be derived. Then we have 
\begin{align*}
\lim_{\re s\to \infty} \sum_{k=2}^K (e^{(\epsilon+t_0-t_{k-1})s}
-e^{(\epsilon+t_0-t_k)s})Q_{z,k}(s)&= \sum_{k=2}^K\lim_{\re s\to \infty} (e^{(\epsilon+t_0-t_{k-1})s}
-e^{(\epsilon+t_0-t_k)s})  Q_{z,k}(s)\\
&=0.
\end{align*}
Similarly, we see that other terms in the right side of 
\eqref{equality_3} also tend to zero as $\re s \to \infty$. So we have $\lim_{\re s\to \infty}e^{\epsilon s} Q_{z,1}(s)=0$ for a.e. $z\in \Gamma.$ This with Lemma \ref{lemma_uniqueness_3} gives that $p_{1,n}=0$ for $n\in \mathbb N^+$, i.e. $\|p_1\|_{L^2(\Omega)}=0$, which contradicts with Assumption \ref{condition_f}. Hence, we have $t_0=\tilde t_0$.

Inserting $t_0=\tilde t_0$ into \eqref{equality_2} and picking $0<\epsilon<\min\{t_1-t_0,\tilde t_1-t_0\},$ we have  
\begin{equation*}
\begin{aligned}
  e^{\epsilon s} [Q_{z,1}(s)- \tilde Q_{z,1}(s)]  =&e^{(\epsilon+t_0-t_1)s}Q_{z,1}(s) -e^{(\epsilon+t_0-\tilde t_1)s}\tilde Q_{z,1}(s)  \\
  &-\sum_{k=2}^K (e^{(\epsilon+t_0-t_{k-1})s}-e^{(\epsilon+t_0-t_k)s})Q_{z,k}(s)\\ 
 &+\sum_{k=2}^{\tilde K} (e^{(\epsilon+t_0-\tilde t_{k-1})s}
 -e^{(\epsilon+t_0-\tilde t_k)s})\tilde Q_{z,k}(s). 
\end{aligned}
\end{equation*}
From the previous arguments, we can prove 
\begin{equation*}
 \lim_{\re s\to \infty}e^{\epsilon s} [Q_{z,1}(s)- \tilde Q_{z,1}(s)]=0,\ \text{a.e.}\ z\in \Gamma.
\end{equation*}
Lemma \ref{lemma_uniqueness_3} gives that $p_{1,n}=\tilde p_{1,n}$ for $n\in \mathbb N^+$, namely $\|p_1-\tilde p_1\|_{L^2(\Omega)}=0$. 

Next we want to show $t_1=\tilde t_1$. Owing the results $p_{1,n}=\tilde p_{1,n}$ and $t_0=\tilde t_0$ into \eqref{equality_2}, it follows that for $s\in\mathbb C^+$ and a.e. $z\in \Gamma$, 
\begin{equation*}
\begin{aligned}
e^{-t_1s}[Q_{z,2}(s)-Q_{z,1}(s)]=&e^{-\tilde t_1s}[\tilde Q_{z,2}(s)-\tilde Q_{z,1}(s)]+e^{-t_2s}Q_{z,2}(s)-e^{-\tilde t_2s}\tilde Q_{z,2}(s)\\
&-\sum_{k=3}^K (e^{-t_{k-1}s}-e^{-t_ks})Q_{z,k}(s)
 +\sum_{k=3}^{\tilde K} (e^{-\tilde t_{k-1}s}-e^{-\tilde t_ks})\tilde Q_{z,k}(s).
\end{aligned}
\end{equation*}
Without loss of generality we assume that $t_1<\tilde t_1$ and pick $0<\epsilon<\min\{\tilde t_1-t_1,t_2-t_1,\tilde t_2-t_1\}$. Following the proof for $t_0=\tilde t_0$, we can show that $\|p_2-p_1\|_{L^2(\Omega)}=0$. This is a contradiction. Hence $t_1=\tilde t_1$. 

Now with the results $t_0=\tilde t_0$, $t_1=\tilde t_1$ and $\|p_1-\tilde p_1\|_{L^2(\Omega)}=0$, from \eqref{equality_2}, we have that for $\re s>0$ and a.e. $z\in \Gamma$, 
\begin{equation*}
   \sum_{k=2}^K (e^{-t_{k-1}s}-e^{-t_ks})Q_{z,k}(s)
 =\sum_{k=2}^{\tilde K} (e^{-\tilde t_{k-1}s}-e^{-\tilde t_ks})\tilde Q_{z,k}(s).
\end{equation*}
From the previous proof, we can derive that $t_2=\tilde t_2$ and $\|p_2-\tilde p_2\|_{L^2(\Omega)}=0$. Using this technique recursively, we can conclude that 
\begin{equation}\label{equality_4}
 t_0=\tilde t_0,\ t_k=\tilde t_k,\ \|p_k-\tilde p_k\|_{L^2(\Omega)}=0\ \text{for}
 \ k=1,\cdots,\min\{K,\tilde K\}.
\end{equation}

Finally we need to show $K=\tilde K$ (For the case of $K=\infty$ and $\tilde K=\infty$, we regard them as $K=\tilde K$). Without loss of generality assuming that $K<\tilde K$,  from \eqref{equality_2} and \eqref{equality_4}, we have for $\re s>0$ and a.e. $z\in \Gamma$, 
\begin{equation*}
 \sum_{k=K+1}^{\tilde K} (e^{-\tilde t_{k-1}s}-e^{-\tilde t_ks})\tilde Q_{z,k}(s)=0.
\end{equation*}
The former proof gives that $\|\tilde p_{K+1}\|_{L^2(\Omega)}=0$, which is a contradiction. Hence, we have that $K=\tilde K$, which together with \eqref{equality_4} leads to the desired result. The proof is complete.  
\end{proof}

\section{Numerical reconstructions.}
\label{numerical_sec}
In this section, we are going to conduct several numerical experiments which realize the inversion process described in Theorem \ref{uniqueness}.
Here we take the example of locating the leaking oil tanker in the ocean. To be more specific,
suppose an oil tanker is planning to travel from harbor A to harbor B and the planned route is known. The ship is leaking oil during the trip and the exact trajectory of the ship is unknown. The target of the engineers is to find the exact location where the ship leaks the oil during the trip. 
The physics of the oil diffusion in the ocean can be modelled as a parabolic equation \eqref{PDE} and the leaking oil is the heat source in the PDE.
By Theorem \ref{uniqueness}, one can find these positions given the fluxes from a nonempty open subset of the boundary.

The oil tanker problem can be formulated as estimating the state of the system (position of the ship) when a sequence of observations in time becomes available. The popular choice to solve the sequential prediction problem is the sequential Monte Carlo (SMC) method, which is also called the particle filter (PF). 
In this work, we are going to use one variant of the SMC as the approach to solving the inverse problem.
Before presenting the results of the numerical experiments, we will first review some preliminaries of the SMC algorithm. 

\subsection{Sequential Monte Carlo (SMC).}
Consider a discrete-time Markov process $\{x_t\}_{t = 0}^{n}$:
\begin{align}
    x_0\sim p(x_0) \text{ and } x_n|x_{n-1}\sim p(x_n|x_{n-1}),
    \label{numerical_bayesian_model}
\end{align}
where $\sim$ means distributed as, $p(x_0)$ is the prior of the initial state $x_0$ and $p(x'|x)$ is the given transition probability
prior from current state $x$ to the next state $x'$.
We are interested in estimating $\{x_t\}_{t = 0}^n$ which is also denoted as $x_{0:n}$, given the observations $\{y_t\}_{t = 0}^n$ at each time step.
In the oil tanker application, the state $x_t$ is the position of the tanker and $y_t$ is the flux on the boundary. 

The equation \eqref{numerical_bayesian_model} together with the likelihood $p(y_t|x_t)$ will define a Bayesian model. We have the prior on the trajectory $x_{0:n} : = \{x_0, ..., x_n\}$ as follow:
\begin{align}
    p(x_{0:n}) = p(x_0)\Pi_{k = 1}^n p(x_k|x_{k-1}).
    \label{numerical_prior}
\end{align}
Moreover,
\begin{align}
    p(y_{0:n}|x_{0:n}) = \Pi_{k = 1}^n p(y_k|x_k).
    \label{numerical_likelihood}
\end{align}
From a Bayesian perspective, the posterior distribution of $x_{0:n}$ given a sequence of the observations $y_{0:n}$ is:
\begin{align}
    p(x_{0:n}|y_{0:n}) = \frac{p(x_{0:n}, y_{0:n} )}{p(y_{0:n})},
    \label{numerical_bayesian}
\end{align}
where $p(x_{0:n}, y_{0:n}) = p(y_{0:n}|x_{0:n})  p(x_{0:n})$
and $p(y_{0:n}) = \int p(x_{0:n}, y_{0:n})dx_{0:n}$.

In our application, the most time-consuming step is to evaluate the likelihood function $p(y_{0:n}|x_{0:n})$, and we will follow the classical framework as in \cite{stuart2010inverse}.
Denoting the forward solver which maps the input state $u$ to the observation $y$ as $\mathcal{G}$, it follows that
\begin{align*}
    y = \mathcal{G}(u)+\eta,
\end{align*}
where $\eta$ is the error associated with the process. There are various sources of error. For example, the error in the observation and the error of the forward solver. The total error $\eta$ is then assumed to follow the Gaussian distribution $\mathcal{N}(0, B)$ where $B$ is the covariance with a proper size. The likelihood function then has the form:
\begin{align*}
    p(y|u) := p(y-\mathcal{G}(u)) = p(\eta)\sim \mathcal{N}(0, B).
\end{align*}

Now the area of interesting is the tracking problem: find the current state given the observations. Theoretically, this means that one needs to find a group of $p(x_{0:n}|y_{0:n})$. 
By the prior \eqref{numerical_prior} and the likelihood \eqref{numerical_likelihood}, the joint distribution $p(x_{0:n}, y_{0:n})$ in \eqref{numerical_bayesian} then satisfies
\begin{align*}
    p(x_{0:n}, y_{0:n}) = p(x_{1:n}, y_{1:n})p(y_n|x_n)p(x_n|x_{n-1}),
\end{align*}
and it follows that 
\begin{align}
    p(x_{0:n}|y_{0:n}) = p(x_{0:n-1}|y_{0:n-1})\frac{p(x_n|x_{n-1}) p(y_n|x_n)}{p(y_n|y_{n-1})},
    \label{numerical_post}
\end{align}
where $p(y_n|y_{n-1}) = \int p(x_{n-1}|y_{0:n-1})p(x_n|x_{n-1})p(y_n|x_n) dx_{{n-1:n}}$. 

Most particle filtering methods are created by a numerical approximation to \eqref{numerical_post}. A common and powerful algorithm is the sequential Monte Carlo (SMC) approximation.  One can use a set of samples (particles) which are drawn from a posterior distribution to approximate the target posterior.
More precisely, the posterior is the average calculated by
\begin{align*}
    \hat{p}(x_{0:n}|y_{0:n}) = \frac{1}{N_p}\sum_{i = 1}^{N_p}\delta_{x_{0:t}^i}dx_{0:t}.
\end{align*}
Here, $\{x_{0:t}^i\}_{i = 1}^{N_p}$ are the particles and $\delta(d\cdot)$ is the standard delta function.
As a result, when the number of samples $N_p$ is large enough, the average will approximate the true distribution.
One of the advantages of the method is that one can prove the asymptotic convergence to the target distribution of interest. Besides, compared to some other approaches such as the extended Kalman filter and unscented Kalman filter, the SMC approach does not assume that the state models are Gaussian \cite{doucet2009tutorial, van2000unscented}. One hence can apply the method in a broader area.

Unfortunately, it is not easy to draw samples from the posterior. An alternative way of drawing is to sample from a proposal distribution $q(x_{0:t}|y_{0:t})$ and to approximate the posterior using a weighted sum of particles. One usually calls this methodology as the Bayesian important sampling (IS). 

Now, two questions remain unsolved. One is how to design a proposal distribution; another one is how to calculate the weight for each sample.
How to select $q(x_{0:t}|y_{0:t})$ is an interesting research topic.
One usually requires that the $q(x_{0:t}|y_{0:t})$ satisfies the following structure:
\begin{align*}
    q(x_{0:t}|y_{0:t}) = q(x_{0:t-1}|y_{1:t-1})q(x_t|x_{0:t-1}, y_{1:t}).
\end{align*}
In this work, we are going to set $q(x_0|y_0) = p(x_0)$ and $q(x_t|x_{0:t-1}, y_{1:t}) = p(x_t|x_{t-1})$. 
According to \cite{gordon1993novel, andrieu2010particle}, if the latent variable dimension is not large and the observations are not too informative,
setting the proposal to be equal to transition probability is good enough.
To verify our method, we will conduct experiments with the transition probabilities $p(x_t|x_{t-1})$; the detailed design is shown in Section \ref{numerical_1} and Section \ref{numerical_2}. The weights can then be calculated as follows:
\begin{align*}
    w_t = w_{t-1}\frac{p(y_t|x_t)p(x_t|x_{t-1})}{q(x_t|x_{0:t-1}, y_{1:t})},
\end{align*}
and $w_1 = \frac{p(x_1)p(y_1|x_1)}{q(x_1|y_1)}$, where $p(y_t|x_t)$ is the likelihood. We omit the derivation and one can refer to \cite{doucet2009tutorial, van2000unscented, andrieu2010particle} for the details.

One issue of the algorithm is the degeneration of the weights, that is, the variance of the importance
weights will become larger and larger with respect to time. Consequently, there are only a few samples which have a meaningful weight when $t$ is large. One way to reduce the effect of degeneration is to use the resampling method. As a result, the algorithm is called the sequential importance resampling (SIR). We provide an SIR algorithm in the Appendix \ref{app_sir}. There are various of resampling methods;
in this work, we apply the systematic resampling as suggested in \cite{doucet2009tutorial}; please check Appendix \ref{app_sir} for the details.

\subsection{Sources.}
\label{sec_sources}
In the next two subsections, we are going to demonstrate how we set up the testing problems.
For equation \eqref{PDE}, we set $\Omega = [0, 1]^2$ and the terminal simulation time is $T = 0.1$.
Also $\kappa(x)$ is the given permeability which will be defined later in Section \ref{numerical_kappa_sec}, and 
the source $\sum_{k = 1}^{K}p_k(x)\chi_{{}_{t\in[t_{k-1}, t_k)}}$ is partially known. More specifically, the time mesh grid $\{t_k\}_{k=0}^K$ is given and the spatial components 
$\{p_k(x)\}_{k = 1}^K$ are set to be characteristic functions. The k-th source $p_k(x)$ is defined as:
\begin{equation}
  p_k(x) = \begin{cases}
            p_k,\quad x\in A_k, \\
            0,\quad\ \text{otherwise},
        \end{cases}
        \label{numerical_delta}
\end{equation}
Here $p_k$ is a known constant; $A_k$ is a square with a known size (is equal to $0.06\times 0.06$ in all our experiments), but the location is not given. Our target work is to find one vertex coordinate of the $A_k$ and $k = 1, ..., K$. One possible $p_k(x)$ is shown in Figure \ref{numerical_source} (left). In the following experiments, we will consider 2 sources. In both cases, $K = 10$ and $p_k = 1250$ for all $k$. 

We here list the vertices (left top) coordinates for both sources and we will trace these two sequences of the coordinates later by our proposed method. The first list of vertices is: 
(0.12, 0.12), (0.20, 0.20), (0.28, 0.28), (0.36, 0.36), (0.44, 0.44), (0.52, 0.52), (0.60, 0.60), (0.68, 0.68), (0.76, 0.76), (0.84, 0.84); 
the second list of vertices is:  
(0.12, 0.12), (0.20, 0.24), (0.28, 0.36), (0.36, 0.48), (0.44, 0.56), (0.52, 0.64), (0.60, 0.72), (0.68, 0.78), (0.76, 0.84), (0.84, 0.90).
For better illustration, we plot the trajectories of the two sources in Figure \ref{numerical_source} and we will discuss these two cases in Section \ref{numerical_1} and Section \ref{numerical_2} respectively. In the oil tanker application, the ship leaks oil in $A_k$ for $k = 1, 2..., K$ and we need to trace the position of all $A_k$ (the left top vertices) given some prior information about how this ship is moving (velocity of the ship).
\begin{figure}[H]
\centering
\includegraphics[scale = 0.35]{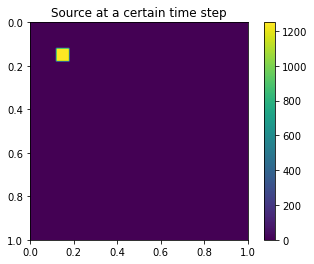}
\includegraphics[scale = 0.35]{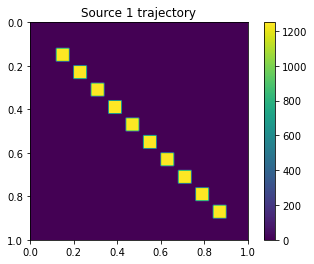}
\includegraphics[scale = 0.35]{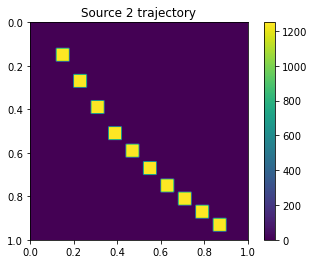}
\caption{Demonstration of the source. Left: example of $p_k(x)$ which is the characteristic function defined in equation \eqref{numerical_delta}. Middle and right images show the trajectories of the source $\{p_k(x)\}_{k = 1}^K$.   
Middle: source one, which is used in Section \ref{numerical_1}. Right: source two, which is used in Section \ref{numerical_2}.} \label{numerical_source}
\end{figure}

\subsection{Permeability fields.}
\label{numerical_kappa_sec}
In equation \eqref{PDE}, the operator $\A$ includes the permeability field $\kappa(x)$. 
In order to better demonstrate the idea of the theorem and the algorithm, we will solve several multiscale problems.
Two different permeability fields are used here.  
The first $\kappa_1(x)$ has multiple frequencies and is defined as follow:
\begin{align*}
    \kappa_1(x, y) = 15\sin(2\pi\cdot0.01x)\cdot\sin(2\pi\cdot 0.05y)+0.05\sin(2\pi\cdot6.5x)\cdot\sin(2\pi\cdot6y)+0.1,
    \label{numerical_multifreq}
\end{align*}
where $(x, y)\in\Omega: = [0, 1]^2$ and it is demonstrated in Figure \ref{kappa_1}.
\begin{figure}[H]
\centering
\includegraphics[scale = 0.45]{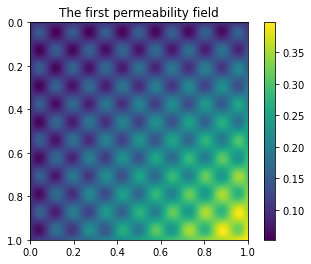}
\caption{The first permeability field 2D demonstration. $\max \kappa_1(x) = 0.3978$ and $\min \kappa_1(x) = 0.0521$.} 
\label{kappa_1}
\end{figure}
The second permeability field $\kappa_2(x)$ (see Figure \ref{numerical_kappa_2} for the illustration) has multiple high contrast channels and is widely used in the multiscale finite element method society \cite{chung2021computational, chetverushkin2021computational}. 
\begin{figure}[H]
\centering
\includegraphics[scale = 0.45]{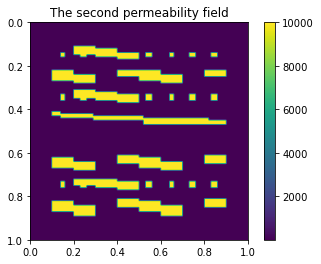}
\caption{The second permeability field 2D demonstration. The permeability in the yellow channels and dots is equal to $10^4$, while the permeability in the purple background is equal to $1$.} 
\label{numerical_kappa_2}
\end{figure}
The multiple frequencies in $\kappa_1(x)$ and high contrast in $\kappa_2(x)$ bring difficulty in solving the corresponding equations numerically.
In this work, we will use the standard finite element solver with spatial discretization $\Delta x = 10^{-2}$.
For the temporal approximation, the backward Euler scheme is adopted, and the time discretization is $\Delta t = 10^{-3}$.

\subsection{The first set of experiments.}
\label{numerical_1}
In this part, we present the first set of experiments whose source is defined in the middle image of Figure \ref{numerical_source}. As we have discussed before, the proposal distribution $q(x'|y', x)$ is the same with the  transition probability $p(x'|x)$ where $x', y'$ denote the new sample and the observation of the next time step, respectively. We hence only need to specify $p(x'|x)$, which is defined in Algorithm \ref{numerical_transition1}.
\begin{algorithm}[H]
\SetAlgoLined
Input: current $x = (x_1, x_2)$\;
Initialize: Set the size related parameters $r_1, r_2>0$ and speed related parameter $s_0$, and draw a random number $s\sim \mathcal{U}(0, s_t)$. We then define $l_1 = s_0+s+r_1$ and $l_2 = s_0+s+r_2$\;
Create a rectangle: the top right vertex coordinate is $(x_1+l_1\Delta x, x_2+l_2\Delta x)$ and the size is equal to $(2r_1\Delta x, 2r_2\Delta x)$\;
Draw a point $x'$ from the rectangle above uniformly and this point will be the next sample.
\caption{Transition algorithm}
\label{numerical_transition1}
\end{algorithm}
Here we set $s_0 = 6,\ s_t = 5$ and $r_1 = r_2 = 4$.
It should be noted that if one is given the exact current state $x = (x_1, x_2)$, the next exact state is $x' = (x_1 + 8\Delta x, x_2+8\Delta x)$ (please check the trajectory of the first source in Section \ref{sec_sources});  however, the center (mean) of all proposed regions is deviated from the $x'$. 
This simulates the real life scenario: one only knows the ship's route plan; however the ship may depart from the original route, and one needs to find the real route and positions when the ship leaks.

The observation is the flux of the boundary. For the multiple frequencies example, we measure the flux in the interval $[0.45, 0.52]$ from each boundary; while for the high contrast permeability example, we only measure the flux in the interval $[0.5, 0.55]$. In both cases, the number of particles is $320$. To evaluate our results, we calculate the mean of the samples at each time step $k$ (this indicates the time step in the source) and compare it with the pre-set values. 
The demonstration and the relative error are shown in Figure \ref{numerical_exp1_multi}.
\begin{figure}[h!]
\centering
\includegraphics[scale = 0.4]{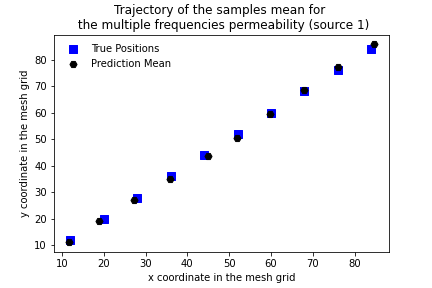}
\includegraphics[scale = 0.4]{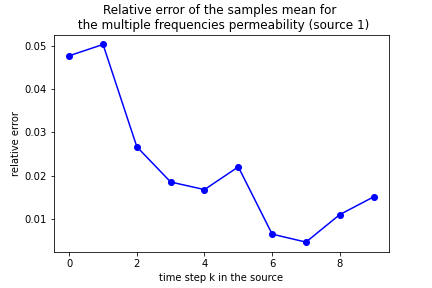}
\caption{Samples mean and the relative error of the multiple frequencies permeability with the first source. Left: samples mean and true source at each time step $k$.  Right: relative error of the samples mean.} 
\label{numerical_exp1_multi}
\end{figure}

\begin{figure}[h!]
\centering
\includegraphics[scale = 0.4]{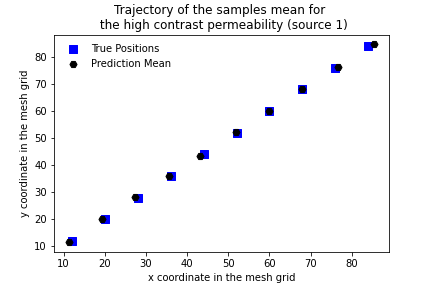}
\includegraphics[scale = 0.4]{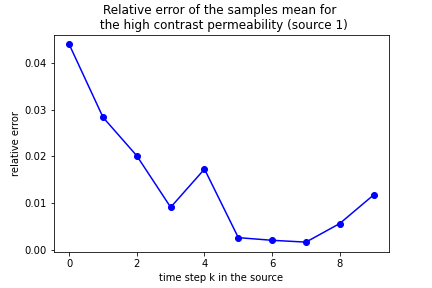}
\caption{Samples mean and the relative error of the high contrast permeability with the first source. Left: samples mean and true source at each time step $k$.  Right: relative error of the samples mean.} 
\label{}
\end{figure}

\subsection{The second set of experiments.}
\label{numerical_2}
In this set of experiments, we will use the second source which is demonstrated as the right image in Figure \ref{numerical_source}.
We use the same transition algorithm in Algorithm \ref{numerical_transition1} but we use a different set of parameters, specifically, $s_0 = 4,\ s_t = 5,\ r_1 = 4,\ r_2 = 5$. Similar to the first source, the center (mean) of all proposed regions is deviated from $x'$. This is a real situation in which the ship's actual route is different from the original plan. The setting makes the predictions more challenging. 
For the flux measurement, we choose an interval $[0.40, 0.60]$ from each boundary for the multiple frequencies permeability and $[0.43, 0.58]$ from each boundary for the high contract permeability. The results are shown in Figures  \ref{numerical_exp2_multi} and \ref{numerical_exp2_high},  respectively.
\begin{figure}[h!]
\centering
\includegraphics[scale = 0.4]{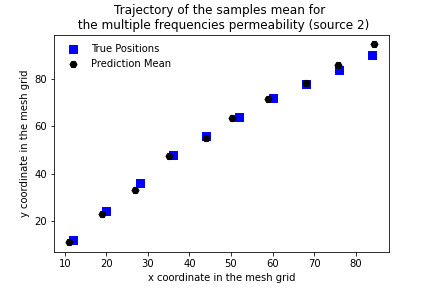}
\includegraphics[scale = 0.4]{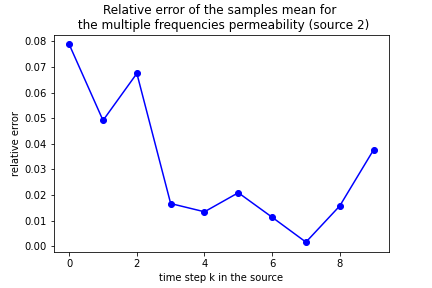}
\caption{Samples mean and the relative error of the multiple frequencies permeability with the second source. Left: samples mean and true source at each time step $k$.  Right: relative error of the samples mean.} 
\label{numerical_exp2_multi}
\end{figure}

\begin{figure}[h!]
\centering
\includegraphics[scale = 0.4]{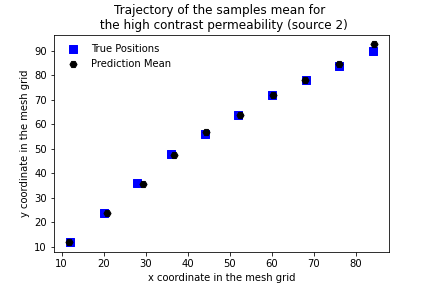}
\includegraphics[scale = 0.4]{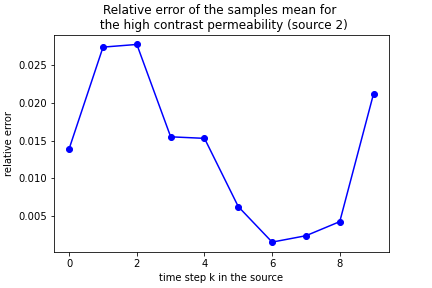}
\caption{Samples mean and the relative error of the high contrast permeability with the second source. Left: samples mean and true source at each time step $k$.  Right: relative error of the samples mean.} 
\label{numerical_exp2_high}
\end{figure}

From the four examples presented above, we conclude that our algorithm is able to trace the exact source positions even if the proposal is far away from the exact route. 
The next challenge for us is to reduce the amount of observation data since by the theory, we can trace the source with flux on any open subset of the boundary. We will study this topic in the future.

\section{Concluding remarks and future works.}\label{section_concluding}
In this work, we consider the inverse source problem in the parabolic equation. The unknown source has a semi-discrete formulation, which can be used to approximate the general form $F(x,t)$. We prove the uniqueness theorem--Theorem \ref{uniqueness}, which says the data from any nonempty open  subset of the boundary can support the uniqueness of the source. This conclusion is of significance in practical applications since it indicates that the source can be recovered from sparse boundary data and then save the cost. For the theoretical analysis, there is an interesting and meaningful work in the future, which is to determine the minimal observed area. Theorem \ref{uniqueness} illustrates that the nonempty open subset of the boundary is sufficient to support the uniqueness. However, only from this theorem, we do not know whether we can minimize the observed area further. To find the minimal observed area is one of our future works. 
  
In the aspect of numerical reconstructions, we propose an oil diffusion problem which is modelled by the parabolic equation.
One needs to trace the positions of the oil tanker given the flux of the boundary.
Since the model problem is a sequential prediction with a sequence of observations up to the time $n$, it is natural to formulate the problem to the Bayesian filtering problem. We apply one particle filter algorithm to solve the inverse  problem with two multiscale permeabilities.
Evaluating the likelihood function is the most time consuming step since it requires solving a forward problem with a fine resolution solver.
We use the finite element solver in this work, however, the solver can be improved by the multiscale finite element solver or the deep learning solver. These solvers are more efficient meanwhile preserving accuracy and we will study these topics in the future. We can also apply deep learning to train a solver. This is equivalent to solving the stochastic parametric PDE with the deep neural network. More precisely, we can train a mapping from the source to the flux on the boundary. One benefit of the method is: it will increase the computation efficiency in evaluating the likelihood function. Another benefit is: we can linearize the network and hence more advanced particle filter algorithms can be used.

\bibliographystyle{plainurl} 
\bibliography{note}

\appendix
\section{Particle filter algorithm.}
\label{app_sir}
Here we give the particle filter algorithm (PF) which we used in the simulation.

\begin{algorithm}[H]
\SetAlgoLined
Initialize the number of time steps $n$ and the number of particles $N$\;
Step 0: at $t = 0$, draw the states $x_0^{i}$ for $i = 1, ..., N$ from the prior $p(x_0)$\;
\For{$t = 1,2, ... , n$}
{
\For{$i = 1, ..., N$}
{
Sample $\hat{x}^{i}_t$ from the prior $q(x_t|x_{0:t-1}^i, y_{1:t})$ and extend the current trajectory by adding the temporary proposed state $\hat{x}_{0:t}^i = (x_{0:t-1}^i, \hat{x}_t^i)$\;
Calculate the importance weights recursively and normalize the resulting weights as  
$$
w_t^i = w_{t-1}^i\frac{p(y_t|\hat{x}_t^i ) p(x_t^i|x_{t-1}^i)}{ q(\hat{x}_t^i| x_{0:t-1}^{i}, y_{1:t})}.\;
$$
}
\For{$i = 1, ..., N$}
{
Normalize the importance weights for the resampling purpose as
$$
\Tilde{w}_t^i = \frac{w_t^i}{\sum_{j = 1}^N w_t^j}.\;
$$
}
Resampling: multiply samples $\hat{x}_{0:t}^i$ with the normalized resampling weight $\Tilde{w}_t^i$ to obtain $N$ random samples $x_{0:t}^i$ which is roughly distributed following $p(x_{0:t}^i|y_{1:t})$\;
Set $w_t^i = \Tilde{w}_t^i = \frac{1}{N}$ for all $i = 1, ..., N$\;
The algorithm will finally return a set of samples whose average is an approximation of the target distribution:
$$p(x_{0:t}|y_{0:t})\approx \hat{p}(x_{0:t}|y_{0:t}) = \frac{1}{N}\sum_{i = 1}^N \delta_{(x_{0:t}^i)}(dx_{0:t}).\;
$$
\caption{Sequential importance resampling (SIR)}
}
\end{algorithm}

It should be noted that we use the systematic resampling method in this work. To implement this method, one first samples $U_1\sim \mathcal{U}[0, \frac{1}{N}]$ and defines $U_i = U_1+\frac{i-1}{N}$ for $i = 2, ..., N$, then we set 
$$
N_n^i = \big|\{ U_j: \sum_{k = 1}^{i-1} W_n^k\leq U_j\leq \sum_{k = 1}^{i} W_n^k\}\big|
$$
with the convention $\sum_{k=1}^0:= 0$.

\end{document}